\newtheorem{theorem}{Theorem}
\newtheorem{lemma}{Lemma}
\newtheorem{problem}{Problem}
\newtheorem{definition}{Definition}
\newtheorem{observation}{Observation}
\journal{}
\begin{document}

\begin{frontmatter}

%% Title, authors and addresses

%% use the tnoteref command within \title for footnotes;
%% use the tnotetext command for theassociated footnote;
%% use the fnref command within \author or \address for footnotes;
%% use the fntext command for theassociated footnote;
%% use the corref command within \author for corresponding author footnotes;
%% use the cortext command for theassociated footnote;
%% use the ead command for the email address,
%% and the form \ead[url] for the home page:
%% \title{Title\tnoteref{label1}}
%% \tnotetext[label1]{}
%% \author{Name\corref{cor1}\fnref{label2}}
%% \ead{email address}
%% \ead[url]{home page}
%% \fntext[label2]{}
%% \cortext[cor1]{}
%% \address{Address\fnref{label3}}
%% \fntext[label3]{}

\title{Distance Magic Index One Graphs}

%% use optional labels to link authors explicitly to addresses:
%% \author[label1,label2]{}
%% \address[label1]{}
%% \address[label2]{}

\author[a]{A V Prajeesh}
\ead{prajeesh\_p150078ma@nitc.ac.in}
\author[a]{Krishnan Paramasivam \corref{cor1}}
%\author[a]{K Paramasivam \corref{cor1}\fnref{fn1}}
\ead{sivam@nitc.ac.in}
\cortext[cor1]{Corresponding author}
%\fntext[fn1]{Student}
\address[a]{Department of Mathematics \\ National Institute of Technology Calicut \\ Kozhikode~\textnormal{673601}, India. }

\begin{abstract}
Let $S$ be a finite set of positive integers. A graph $G=(V(G),E(G))$ is said to be $S$-magic if there exists a bijection $f: V(G) \rightarrow S$ such that for any vertex $u$ of $G$, $\sum_{v\in N_G(u)} f(v)$ is a constant, where $N_G(u)$ is the set of all vertices adjacent to $u$. Let $\alpha(S)$ = $\displaystyle \max_{x\in S} x$. Define $i(G)$ = $\displaystyle \min_S \alpha(S) $, where the minimum runs over all $S$ for which the graph $G$ is $S$-magic. Then $i(G)-|V(G)|$ is called the distance magic index of a graph $G$. In this paper, we compute the distance magic index of graphs $G[\bar{K_{n}}]$, where $G$ is any arbitrary regular graph, disjoint union of $m$ copies of complete multi-partite graph and disjoint union of $m$ copies of graph $C_{p}[\bar{K_{n}}]$, with $m\geq 1$. In addition to that, we also prove some necessary conditions for an regular graph to be of distance magic index one.
\end{abstract}

\begin{keyword}
Distance magic\sep $S$-magic graph \sep distance magic index \sep complete multi-partite graphs \sep lexicographic product.
%% PACS codes here, in the form: \PACS code \sep code
%% MSC codes here, in the form: \MSC code \sep code
%% or \MSC[2008] code \sep code (2000 is the default)
\MSC[2010] 05C78 \sep 05C76.
\end{keyword}

\end{frontmatter}

%% \linenumbers

\section{Introduction}
\noindent In this paper, we consider only simple and finite graphs. We use $V(G)$ for the vertex set and $E(G)$ for the edge set of a graph $G$. The neighborhood, $N_G(v)$ or shortly $N(v)$ of a vertex $v$ of $G$ is the set of all vertices adjacent to $v$ in $G$. For further graph theoretic terminology and notation, we refer Bondy and Murty \cite{bondy2008graph} and Hammack $et$ $al.$\cite{hammack2011handbook}.
\par
A distance magic labeling of a graph $G$ is a bijection $f : V(G) \rightarrow \{1,...,|V(G)|\}$, such that for any $u$ of $G$, the weight of $u$, $w_G(u) = \sum\limits_{v\in N_{G}(u)} f(v) $ is a constant, say $c$. A graph $G$ that admits such a labeling is called a distance magic graph.
\par
The motivation for distance magic labeling came from the concept of magic squares and tournament scheduling. An equalized incomplete tournament, denoted by $EIT(n, r)$, is a tournament, with $n$ teams and $r$ rounds, which satisfies the following conditions:
\begin{itemize}
	\item [(i)] every team plays against exactly $r$ opponents.
	\item [(ii)] the total strength of the opponents, against which each team plays is a constant.
\end{itemize}
\par
Therefore, finding a solution for an equalized incomplete tournament $EIT(n,r)$ is equivalent to establish a distance magic labeling of an $r$-regular graph of order $n$. For more details, one can refer \cite{froncek2007fair,froncek2006fair}.
\par
The following results provide some necessary condition for distance magicness of regular graphs.
\begin{theorem}\label{oddregular}
	\textnormal{\cite{jinnah,miller2003distance,rao,vilfred}} No $r$-regular graph with $r$-odd can be a distance magic graph.
\end{theorem}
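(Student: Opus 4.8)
The plan is to argue by contradiction using a global averaging identity together with an elementary parity observation. Suppose, for the sake of contradiction, that $G$ is an $r$-regular graph of order $n$ with $r$ odd which admits a distance magic labeling $f : V(G) \to \{1,\dots,n\}$ with magic constant $c$, so that $w_G(u)=\sum_{v\in N_G(u)} f(v)=c$ for every $u\in V(G)$.

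First I would sum the weight condition over all vertices to convert the local constraint into an arithmetic one. Summing $w_G(u)=c$ over $u\in V(G)$ yields $nc$ on the left-hand side. On the right-hand side, each label $f(v)$ is counted once for every neighbour of $v$, i.e.\ exactly $\deg_G(v)=r$ times; hence $\sum_{u\in V(G)} w_G(u)= r\sum_{v\in V(G)} f(v)= r\cdot\frac{n(n+1)}{2}$. Equating the two expressions gives $nc=\frac{r\,n(n+1)}{2}$, and therefore $c=\frac{r(n+1)}{2}$.

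Next I would invoke the handshake lemma: $\sum_{v\in V(G)}\deg_G(v)=rn=2|E(G)|$ is even, and since $r$ is odd this forces $n$ to be even. Then $n+1$ is odd, and as $r$ is also odd the product $r(n+1)$ is odd, so $c=\tfrac{r(n+1)}{2}$ is not an integer. But $c$ is a sum of elements of $\{1,\dots,n\}$ and hence an integer, a contradiction. Therefore no $r$-regular graph with $r$ odd can be distance magic.

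I do not expect a genuine obstacle here: the entire content is the observation that averaging the local weight condition over all vertices upgrades $r$-regularity into the divisibility requirement $2\mid r(n+1)$, after which the parity of $n$ dictated by the handshake lemma immediately closes the argument. The only point that should be stated with care is the double counting in the second step, namely that each label $f(v)$ contributes to the weight of precisely $\deg_G(v)=r$ vertices.
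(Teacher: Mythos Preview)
Your argument is correct: the double-counting identity $nc = r\cdot \frac{n(n+1)}{2}$ together with the handshake parity constraint ($rn$ even, $r$ odd $\Rightarrow$ $n$ even) forces $c = \frac{r(n+1)}{2}$ to be a half-integer, contradicting its integrality. Note, however, that the paper does not supply its own proof of this statement; the theorem is simply quoted from the cited references without argument. What you have written is precisely the standard proof found in those sources, and it is the same averaging-plus-parity mechanism that the paper itself deploys later (in Lemma~1) for the $S$-magic setting with $\theta(G)=1$.
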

\begin{theorem}\label{2mode4}
	\textnormal{\cite{froncek2006fair}} Let $EIT(n, r)$ be an equalized tournament with an even number $n$ of teams and $r\equiv 2 \mod 4$. Then $n\equiv 0 \mod 4$.
\end{theorem}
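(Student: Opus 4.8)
The plan is to translate the tournament condition into a statement about a distance magic labeling and then run a double-counting argument modulo $2$. By the discussion preceding the theorem, an $EIT(n,r)$ is precisely a distance magic labeling $f\colon V(G)\to\{1,\dots,n\}$ of some $r$-regular graph $G$ of order $n$ with constant weight $c$. Summing $w_G(u)=\sum_{v\in N_G(u)}f(v)$ over all $u\in V(G)$ and noting that each label $f(v)$ is counted once for every neighbour of $v$, i.e.\ exactly $r$ times, gives $nc=r\sum_{v\in V(G)}f(v)=r\cdot\frac{n(n+1)}{2}$, hence $c=\frac{r(n+1)}{2}$. Since $n$ is even, $n+1$ is odd; and since $r\equiv 2\pmod 4$, the number $r/2$ is odd, so $c=(r/2)(n+1)$ is a product of two odd integers and is therefore odd.

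Next I would look at the parity of the labels. Let $O\subseteq V(G)$ be the set of vertices receiving an odd label and $E$ the set receiving an even label; because $n$ is even, $|O|=|E|=n/2$. For any vertex $u$, reducing the equality $w_G(u)=c$ modulo $2$ shows that the number $o(u):=|N_G(u)\cap O|$ of odd-labelled neighbours of $u$ has the same parity as $c$. As $c$ is odd, $o(u)$ is odd for every $u\in V(G)$, in particular for every $u\in O$.

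Finally I would restrict this to the vertices of $O$ and double count. On one hand $\sum_{u\in O}o(u)=\sum_{u\in O}|N_G(u)\cap O|=2\,e(G[O])$ by the handshake lemma applied to the subgraph $G[O]$ induced by $O$, so the sum is even. On the other hand it is a sum of $|O|$ terms, each of which is odd, so it is congruent to $|O|\pmod 2$. Hence $|O|=n/2$ is even, i.e.\ $n\equiv 0\pmod 4$, as claimed. I do not expect a real obstacle; the one place needing care is the parity bookkeeping — making sure the conclusion ``$c$ odd'' genuinely uses the full hypothesis $r\equiv 2\pmod 4$ rather than merely ``$r$ even'', and that exactly half of the labels $1,\dots,n$ are odd when $n$ is even so that $|O|=n/2$.
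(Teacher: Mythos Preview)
Your argument is correct. The paper does not actually give its own proof of this theorem: it is quoted from \cite{froncek2006fair} as background, so there is no in-paper proof to compare against line by line. That said, the very technique you use --- showing that the magic constant is odd, deducing that every vertex has an odd number of odd-labelled neighbours, and then invoking the handshake lemma on the subgraph induced by the odd-labelled vertices --- is exactly the mechanism the authors deploy later in the proof of Lemma~\ref{2mod42} (Case~1), which they explicitly flag as ``similar to that of Theorem~\ref{oddregular} and~\ref{2mode4}.'' So your approach matches what the paper implicitly has in mind for this result.
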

\par  In \cite{miller2003distance}, Miller $et$ $al.$ discussed the distance magic labeling of the graph $H_{n,p}$, the complete multi-partite graph with $p$ partitions in which each partition has exactly $n$ vertices, $n \geq 1$ and $p \geq 1$. It is clear that $H_{n,1}$ is a distance magic graph. From \cite{miller2003distance} it is observed that $K_{n}$ is distance magic if and only if $n = 1$ and hence, $H_{1,p}\cong K_{p} $ is not distance magic for all $p\ne 1$. The next result gives a characterization for the distance magicness of $H_{n,p}$.
\begin{theorem}\label{millerHnp}
	\textnormal{\cite{miller2003distance}} Let $n > 1$ and $p > 1$. $H_{n,p}$ has a labeling if and only if either $n$ is even or both $n$ and $p$ are odd.
\end{theorem}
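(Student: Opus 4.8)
The plan is to reduce distance magicness of $H_{n,p}$ to a balanced set-partition problem. Write $H_{n,p}$ with parts $P_1,\dots,P_p$, each of size $n$, and put $T=\sum_{j=1}^{np}j=\tfrac{np(np+1)}{2}$. Since two vertices of $H_{n,p}$ are adjacent precisely when they lie in distinct parts, for $u\in P_i$ we have $N_{H_{n,p}}(u)=V(H_{n,p})\setminus P_i$, hence $w(u)=T-\sum_{v\in P_i}f(v)$. Thus a bijection $f$ is a distance magic labeling iff the part-sums $s_i:=\sum_{v\in P_i}f(v)$ are all equal, and since $\sum_i s_i=T$ this forces $s_i=T/p=\tfrac{n(np+1)}{2}$ for every $i$. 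This already gives the ``only if'' direction: a labeling can exist only if $\tfrac{n(np+1)}{2}\in\mathbb{Z}$, i.e. $n(np+1)$ is even; this holds automatically when $n$ is even, while for $n$ odd it forces $np+1$ even, i.e. $p$ odd.

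For the ``if'' direction, by the equivalence above it suffices, whenever $n$ is even or $n$ and $p$ are both odd, to partition $\{1,\dots,np\}$ into $p$ blocks of size $n$ each of sum $\sigma:=\tfrac{n(np+1)}{2}$; placing the $i$-th block bijectively on $P_i$ then produces a distance magic labeling whose constant weight is $T-\sigma=\tfrac{(p-1)n(np+1)}{2}$. If $n$ is even, pair $j$ with $np+1-j$ for $1\le j\le np/2$, getting $np/2$ pairs each of sum $np+1$, and put $n/2$ of them into each block, so each block sums to $\tfrac n2(np+1)=\sigma$. If $n$ and $p$ are both odd (so $n,p\ge 3$), I would take an $n\times p$ magic rectangle --- an array filled with $1,\dots,np$ all of whose row sums are equal and all of whose column sums are equal, which exists whenever $n\equiv p\pmod 2$ and $n,p\ge 2$ --- and use its columns as the blocks; each column automatically sums to $T/p=\sigma$. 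To keep the argument self-contained one can instead write $n=3+2s$, partition $\{1,\dots,3p\}$ into $p$ equal-sum triples by choosing three permutations $\pi_1,\pi_2,\pi_3$ of $\{1,\dots,p\}$ with $\pi_1(c)+\pi_2(c)+\pi_3(c)$ independent of $c$ (for instance $\pi_1=\mathrm{id}$, $\pi_2$ the cyclic shift by $\lfloor p/2\rfloor$, and $\pi_3$ forced by the constraint, which one checks is again a permutation for $p$ odd), and then adjoin $s$ further ``pair-blocks'' drawn from $\{3p+1,\dots,np\}$, one balanced pair per block, exactly as in the even case.

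Most of this is bookkeeping; the genuinely nontrivial point is the both-odd case, where the simple pairing trick is unavailable because each block has odd size. The crux is therefore the existence of the balanced partition of $\{1,\dots,np\}$ into odd-size equal-sum blocks --- supplied either by citing magic rectangles or, if one prefers an elementary route, by the explicit three-permutation construction, whose only delicate step is verifying that the chosen shift of the identity makes all column sums coincide.
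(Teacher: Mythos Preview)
The paper does not prove this statement at all; Theorem~\ref{millerHnp} is quoted from Miller \emph{et al.}~\cite{miller2003distance} and is used only as an input to later results, so there is no in-paper argument to compare against.

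Your argument is correct. The reduction to equal-sum blocks via $w(u)=T-s_i$ is exactly the right observation, and the necessity of $n(np+1)$ being even is immediate from it. For sufficiency, the even-$n$ pairing is standard, and in the both-odd case either route you outline works: citing magic rectangles is clean (and safe here since $n,p\ge 3$ are odd, so the $2\times 2$ exception is irrelevant), while your self-contained version also goes through --- with $\pi_1=\mathrm{id}$ and $\pi_2$ the cyclic shift by $(p-1)/2$, one computes $\pi_3(c)=p+2-2c$ for $1\le c\le (p+1)/2$ and $\pi_3(c)=2(p+1)-2c$ for $(p+3)/2\le c\le p$, which hits the odd and even residues respectively and is therefore a permutation. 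One small wording point: you want $s$ balanced pairs \emph{per block}, not ``one balanced pair per block''; the arithmetic you implicitly rely on, $3(3p+1)/2+s(np+3p+1)=n(np+1)/2$ with $n=2s+3$, does check out.
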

\par Recall a standard graph product (see \cite{hammack2011handbook}). Let $G$ and $H$ be two graphs. Then, the lexicographic product $G \circ H$ or $G[H]$ is a graph with the vertex set $V(G)\times V(H)$. Two vertices $(g,h)$ and $(g',h')$ are adjacent in $G[H]$ if and only if $g$ is adjacent to $g'$ in $G$, or $g=g'$ and $h$ is adjacent to $h'$ in $H$.
\par Miller $et$ $al.$ \cite{miller2003distance} proved the following.
\begin{theorem}\label{millerregularcomp}
	\textnormal{\cite{miller2003distance}} Let $G$ be an arbitrary regular graph. Then $G[\bar{K_n}]$ is distance
	magic for any even $n$.
\end{theorem}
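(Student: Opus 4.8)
The plan is to reduce distance magicness of $G[\bar{K_n}]$ to a purely arithmetic partition problem about the labels on the fibres. Write $V(G)=\{v_1,\dots,v_p\}$ and suppose $G$ is $r$-regular. In $G[\bar{K_n}]$ the vertex set is $V(G)\times\{1,\dots,n\}$, and since $\bar{K_n}$ is edgeless, $(v_i,j)$ and $(v_k,\ell)$ are adjacent precisely when $v_iv_k\in E(G)$; in particular $N_{G[\bar{K_n}]}(v_i,j)=\{(v_k,\ell):v_k\in N_G(v_i),\ 1\le \ell\le n\}$, which does \emph{not} depend on $j$. Hence for any bijection $f:V(G)\times\{1,\dots,n\}\to\{1,\dots,pn\}$ and any vertex $(v_i,j)$,
\[
w_{G[\bar{K_n}]}(v_i,j)=\sum_{v_k\in N_G(v_i)}\ \sum_{\ell=1}^{n} f(v_k,\ell)=\sum_{v_k\in N_G(v_i)} F(v_k),
\]
where $F(v_k):=\sum_{\ell=1}^{n} f(v_k,\ell)$ is the sum of the labels placed on the fibre over $v_k$. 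Consequently it suffices to choose $f$ so that $F(v_k)$ takes the \emph{same} value for every $k$: then every weight equals $r$ times that common value, a constant, and $f$ is a distance magic labeling.

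So the task becomes: partition $\{1,2,\dots,pn\}$ into $p$ blocks of size $n$ all having the same sum, and then put the $k$-th block (in any order) on the fibre over $v_k$. Since $n$ is even, I would use the standard pairing $\{1,pn\},\{2,pn-1\},\dots,\{\,pn/2,\ pn/2+1\,\}$, each pair summing to $pn+1$; there are $pn/2$ such pairs, so handing $n/2$ pairs to each of the $p$ blocks produces $p$ blocks of size $n$, all with common sum $\tfrac{n}{2}(pn+1)$. This is exactly where evenness of $n$ enters: it guarantees $n/2\in\mathbb{Z}$ and that the pairs split evenly among the blocks. With this $f$ every weight equals $\tfrac{rn}{2}(pn+1)$, so $G[\bar{K_n}]$ is distance magic.

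There is no real obstacle beyond the initial observation that the fibre structure of the lexicographic product with $\bar{K_n}$ makes each weight depend only on the fibre sums $F(v_k)$; once that is in hand the construction for even $n$ is immediate and uses nothing about $G$ except regularity. (If one wanted more, one would have to decide for which $p$ the set $\{1,\dots,pn\}$ admits such an equal-sum partition when $n$ is odd — which forces $p$ odd as well — but that is not needed for the statement as given.)
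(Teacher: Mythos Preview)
The paper does not actually prove this theorem; it is merely quoted as a known result from Miller \emph{et al.}\ \cite{miller2003distance}, so there is no in-paper argument to compare against. Your proof is correct and is essentially the standard one from the original source: observe that in $G[\bar{K_n}]$ every weight depends only on the fibre sums $F(v_k)$, then use the complementary pairing $\{t,\,pn+1-t\}$ to partition $\{1,\dots,pn\}$ into $p$ blocks of $n/2$ pairs each with common sum $\tfrac{n}{2}(pn+1)$, yielding magic constant $\tfrac{rn}{2}(pn+1)$.
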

Later, Froncek $et$ $al.$ \cite{froncek2006fair,froncek2011constructing} proved the following results.
\begin{theorem}\label{froncek3}
	\textnormal{\cite{froncek2006fair}} For $n$ even an $EIT(n, r)$ exists if and only if $2 \leq r \leq n-
	2; r \equiv 0 \mod 2$ and either $n \equiv 0 \mod 4$ or $n \equiv r + 2 \equiv 2 \mod 4$.
\end{theorem}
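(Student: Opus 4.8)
The plan is to pass from tournaments to graphs. As recalled in the introduction, a solution of $EIT(n,r)$ is the same as a distance magic labeling of an $r$-regular graph of order $n$, so I would restate the assertion as: \emph{for $n$ even there exists an $r$-regular distance magic graph on $n$ vertices if and only if $r$ is even, $2\le r\le n-2$, and either $n\equiv 0\pmod 4$ or $r\equiv 0\pmod 4$.} (The clause ``$n\equiv r+2\equiv 2\pmod 4$'' in the statement is merely a way of writing ``$n\equiv 2\pmod 4$ and $r\equiv 0\pmod 4$'', so nothing is lost in this reformulation.) I would then handle necessity and sufficiency separately.

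For necessity, suppose $G$ is an $r$-regular distance magic graph of order $n$ with $n$ even. Since a tournament has at least one round, $r\ge 1$, while Theorem \ref{oddregular} rules out odd $r$; hence $r$ is even and $r\ge 2$. As $G$ is simple, $r\le n-1$, and since $r$ and $n$ are both even this sharpens to $r\le n-2$. Finally, if $r\equiv 2\pmod 4$ then Theorem \ref{2mode4}, applied to the associated $EIT(n,r)$, forces $n\equiv 0\pmod 4$; contrapositively, $n\equiv 2\pmod 4$ forces $r\equiv 0\pmod 4$. This is exactly the stated arithmetic restriction.

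For sufficiency, assume $n$ even, $r$ even, $2\le r\le n-2$, and $n\equiv 0\pmod 4$ or $r\equiv 0\pmod 4$. Put $m=n/2$ and $d=r/2$, so that $1\le d\le m-1$. The key step is to produce a $d$-regular graph $H$ on $m$ vertices. By the standard realizability condition such a graph exists whenever $0\le d\le m-1$ and $dm$ is even; here $d\le m-1$ holds, and $dm$ is even unless both $d$ and $m$ are odd. But if $d=r/2$ is odd then $r\equiv 2\pmod 4$, so the hypothesis forces $n\equiv 0\pmod 4$ and hence $m=n/2$ is even; thus $H$ exists in every case (concretely, one may take a circulant, together with a perfect matching when $d$ is odd). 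Now $H[\bar{K_2}]$ has $2m=n$ vertices and is $2d=r$-regular, and by Theorem \ref{millerregularcomp}, applied with the even value $2$, it is distance magic. Hence $EIT(n,r)$ is solvable.

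I do not expect a genuine obstacle: the content is imported, with necessity coming from Theorems \ref{oddregular} and \ref{2mode4} and sufficiency from Theorem \ref{millerregularcomp} via the lexicographic product with $\bar{K_2}$. The only delicate point, and the natural place to slip, is the existence of the base graph $H$ when $d=r/2$ is odd --- then every vertex of $H$ would have odd degree, so $m$ must be even, and the hypothesis is precisely what guarantees this. The pairs $(n,r)$ for which $m=n/2$ and $d=r/2$ are simultaneously odd are exactly those with $n\equiv r\equiv 2\pmod 4$, that is, the pairs the theorem excludes.
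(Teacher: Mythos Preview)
The paper does not prove Theorem~\ref{froncek3}; it is quoted without proof from \cite{froncek2006fair}, so there is no in-paper argument to compare your proposal against.

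That said, your argument is correct. The necessity direction is exactly what Theorems~\ref{oddregular} and~\ref{2mode4} give, together with the trivial degree bound for simple graphs. For sufficiency, the construction $H[\bar{K_2}]$ with $H$ a $d$-regular graph on $m=n/2$ vertices and $d=r/2$ is valid: such an $H$ exists whenever $dm$ is even, and your parity analysis correctly shows that the excluded case $d,m$ both odd is precisely $n\equiv r\equiv 2\pmod 4$, the case the theorem rules out. Theorem~\ref{millerregularcomp} then yields the distance magic labeling of $H[\bar{K_2}]$, which is $r$-regular on $n$ vertices as required. The only soft spot is the lower bound $r\ge 2$: a $0$-regular graph is vacuously distance magic, so the bound $r\ge 2$ in the statement is really a convention about what counts as a tournament rather than a consequence of distance magicness; your appeal to ``a tournament has at least one round'' is the right way to handle it.
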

\begin{theorem}\label{froncek1}
	\textnormal{\cite{froncek2011constructing}} Let $n$ be odd, $p \equiv r \equiv 2 \mod 4$, and $G$ be an $r$-regular
	graph with $p$ vertices. Then $G[\bar{K_n}]$ is not distance magic.
\end{theorem}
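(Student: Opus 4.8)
The plan is to assume $G[\bar{K_n}]$ has a distance magic labeling $f$ with magic constant $c$ and derive a contradiction from two parity counts that are forced by the hypotheses $n$ odd, $p\equiv r\equiv 2\pmod{4}$. Here $V(G[\bar{K_n}])=V(G)\times V(\bar{K_n})$ has $pn$ vertices. The first step is to reduce to the ``block sums''. For $g\in V(G)$ put $S_g=\sum_{h\in V(\bar{K_n})}f(g,h)$, the sum of the $n$ labels on the copy of $\bar{K_n}$ over $g$; then $\{S_g:g\in V(G)\}$ comes from a partition of $\{1,\dots,pn\}$ into $p$ blocks of size $n$. Since $\bar{K_n}$ is edgeless, $N_{G[\bar{K_n}]}(g,h)=\bigcup_{g'\in N_G(g)}\{g'\}\times V(\bar{K_n})$, so $w_{G[\bar{K_n}]}(g,h)=\sum_{g'\in N_G(g)}S_{g'}$ does not depend on $h$; hence the magic condition reads
\[
\sum_{g'\in N_G(g)}S_{g'}=c\qquad\text{for every }g\in V(G).
\]

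Next I would compute $c$ and its parity. Summing weights over all $pn$ vertices and switching the order of summation gives $pn\,c=n\sum_{g'\in V(G)}\deg_G(g')\,S_{g'}=nr\sum_{g'}S_{g'}=nr\cdot\frac{pn(pn+1)}{2}$, so $c=\frac{nr(pn+1)}{2}$. Because $r\equiv 2\pmod{4}$ we may write $r=2(2m+1)$, and because $p\equiv 2\pmod{4}$ and $n$ is odd we have $pn\equiv 2\pmod{4}$, so $pn$ is even and $pn+1$ is odd; thus $c=n(2m+1)(pn+1)$ is a product of three odd integers, i.e.\ $c$ is odd.

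The key step is then a modulo-$2$ argument. Let $T=\{g\in V(G):S_g\text{ is odd}\}$. On one hand $\sum_{g\in V(G)}S_g=\frac{pn(pn+1)}{2}=(pn/2)(pn+1)$ is a product of two odd integers (again using $pn\equiv 2\pmod{4}$), hence odd, so $|T|$ is odd. On the other hand, reducing the magic equation modulo $2$ yields $|N_G(g)\cap T|\equiv\sum_{g'\in N_G(g)}S_{g'}\equiv c\equiv 1\pmod{2}$ for every $g\in V(G)$; in particular every vertex of the induced subgraph $G[T]$ has odd degree $|N_G(g)\cap T|$. By the handshake lemma the number of odd-degree vertices of $G[T]$, which is $|T|$, must be even --- contradicting $|T|$ odd. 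This contradiction proves that $G[\bar{K_n}]$ is not distance magic.

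The only subtle point --- and the real content of the proof --- is recognising which two invariants to track: the hypotheses are tuned precisely so that $c$ is odd and the number of odd block-sums is odd simultaneously, and these two facts are incompatible once one double-counts the edges inside $T$. Everything else is the routine neighbourhood description of the lexicographic product and elementary modular arithmetic, so I do not expect any genuine obstacle beyond keeping the parity bookkeeping straight.
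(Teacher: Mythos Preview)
The paper does not prove this theorem: it is quoted from \cite{froncek2011constructing} and stated without proof, so there is no in-paper argument to compare against. Your proof is correct. The reduction to block sums $S_g$, the computation $c=\tfrac{nr(pn+1)}{2}$, the parity conclusions ($c$ odd and $\sum_g S_g$ odd, hence $|T|$ odd), and the handshake contradiction on $G[T]$ are all sound; the one phrasing to tighten is that $|N_G(g)\cap T|$ is odd for \emph{every} $g\in V(G)$, and then you restrict to $g\in T$ to invoke the handshake lemma on $G[T]$.

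For what it is worth, the same parity/handshake mechanism is exactly what this paper deploys in its proof of Lemma~\ref{2mod42}, Case~1 (``the graph induced by the vertices having odd label has every vertex of odd degree, a contradiction''), and it is also the argument in the original Froncek--Kov\'a\v{r}--Kov\'a\v{r}ov\'a paper. So your approach is not merely correct but the standard one.
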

\begin{theorem}\label{froncek2}
	\textnormal{\cite{froncek2011constructing}} Let $G$ be an arbitrary $r$-regular graph with an odd number
	of vertices and $n$ be an odd positive integer. Then $r$ is even and the graph
	$G[\bar{K_n}]$ is distance magic.
\end{theorem}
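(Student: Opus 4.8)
The plan is to establish the two assertions separately. That $r$ is even is immediate: from the handshake identity $\sum_{v\in V(G)}\deg_G(v)=rp=2|E(G)|$ the product $rp$ is even, and since $p:=|V(G)|$ is odd, $r$ must be even (this is consistent with Theorem~\ref{oddregular}, as $G[\bar{K_n}]$ is then $rn$-regular with $rn$ even). For the magic labeling, write $V(G)=\{u_1,\dots,u_p\}$ and identify $V(G[\bar{K_n}])$ with $V(G)\times\{1,\dots,n\}$. Since $\bar{K_n}$ is edgeless, $(u,i)$ is adjacent to $(v,j)$ in $G[\bar{K_n}]$ exactly when $uv\in E(G)$, so $N_{G[\bar{K_n}]}\big((u,i)\big)=N_G(u)\times\{1,\dots,n\}$ independently of $i$. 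Hence, for \emph{any} bijection $f:V(G[\bar{K_n}])\to\{1,\dots,pn\}$,
\[
w_{G[\bar{K_n}]}\big((u,i)\big)=\sum_{v\in N_G(u)}\;\sum_{j=1}^{n}f(v,j)=\sum_{v\in N_G(u)}F(v),\qquad\text{where }F(v):=\sum_{j=1}^{n}f(v,j).
\]
So it suffices to pick $f$ with $F$ \emph{constant}: every weight is then $r$ times that constant. Since the $p$ fibers $\{f(u_k,j):1\le j\le n\}$ partition $\{1,\dots,pn\}$ into sets of size $n$, this reduces to partitioning $\{1,\dots,pn\}$ into $p$ blocks $B_1,\dots,B_p$, each of size $n$ and of the same sum $s=\frac{n(pn+1)}{2}$ (an integer because $pn$ is odd); labeling the fiber over $u_k$ by $B_k$ then gives every weight equal to $rs$.

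It remains to construct such a balanced partition for odd $p$ and odd $n$. The case $p=1$ is trivial ($G\cong K_1$, and $\bar{K_n}$ is edgeless), so take $p\ge 3$ and $n\ge 3$. One option is to invoke the known existence of a magic rectangle $R(n,p)$ --- an $n\times p$ array filled with $1,\dots,np$ whose row sums are all equal and whose column sums are all equal --- which exists whenever $n\equiv p\pmod{2}$; its $p$ columns are the desired blocks. Alternatively one can induct on the odd integer $n$: the base case $n=3$ is the classical partition of $\{1,\dots,3p\}$ into $p$ triples of equal sum (valid for odd $p$), and the step from $n$ to $n+2$ adjoins to the blocks the $p$ two-element sets $\{pn+t,\;pn+2p+1-t\}$, $t=1,\dots,p$, which partition $\{pn+1,\dots,p(n+2)\}$ and each sum to $2p(n+1)+1$, so the blocks remain of equal sum (a short computation shows the new common value is $\tfrac{(n+2)(p(n+2)+1)}{2}$).

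The only genuinely non-routine ingredient is this base input: the equal-sum triple partition of $\{1,\dots,3p\}$ for odd $p$, equivalently three permutations $\alpha,\beta,\gamma$ of $\{1,\dots,p\}$ with $\alpha(i)+\beta(i)+\gamma(i)=\tfrac{3(p+1)}{2}$ for all $i$, the $i$-th triple being $\{\alpha(i),\,p+\beta(i),\,2p+\gamma(i)\}$. Such a balanced triple of permutations exists precisely when $p$ is odd, and the delicate point in an explicit construction is to pin down the common sum \emph{exactly} and not merely modulo $p$. The remaining ingredients --- the handshake argument, the neighborhood computation in the lexicographic product, and the padding step --- are routine.
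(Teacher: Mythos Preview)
The paper does not prove this theorem; it is quoted from \cite{froncek2011constructing} without argument. So there is no ``paper's own proof'' to compare against, and the relevant question is simply whether your argument is sound.

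Your reduction is correct and is the standard one: the neighborhood computation in $G[\bar{K_n}]$ is right, and the problem collapses to partitioning $\{1,\dots,pn\}$ into $p$ size-$n$ blocks of equal sum, after which the weight of every vertex is $r$ times that common block-sum. Your two routes to the partition --- invoking Harmuth's magic-rectangle theorem, or inducting from the $n=3$ equal-sum triple system by appending complementary pairs --- are both legitimate, and the inductive padding step checks out arithmetically. This is essentially how the original source proceeds (via Kotzig arrays / magic rectangles), so your approach is not genuinely different.

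One edge case you glossed over: you dispatch $p=1$ but silently pass to $n\ge 3$ without comment. As literally stated (``$n$ an odd positive integer''), the theorem would include $n=1$, where it is false --- e.g.\ $C_5[\bar{K_1}]=C_5$ is $2$-regular on $5$ vertices but not distance magic. The intended reading, consistent with the rest of the paper (cf.\ Theorem~\ref{millerHnp}), is $n>1$; it would be worth saying so explicitly rather than assuming it.
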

\par The following results by Shafiq $et$ $al$. \cite{shafiq2009distance}, discusses the distance magic labeling of disjoint union of $m$ copies of complete multi-partite graphs, $H_{n,p}$, and disjoint union of $m$ copies of product graphs, $C_{p}[\bar{K_{n}}]$.
\begin{theorem}\label{multi}
	\textnormal{\cite{shafiq2009distance}}
	\begin{enumerate}
		\item [\textnormal{(i)}] If $n$ is even or $mnp$ is odd, $m \geq 1; n > 1$ and $p > 1$; then $mH_{n,p}$ has a distance magic labeling.
		\item [\textnormal{(ii)}] If $np$ is odd, $p \equiv 3 \mod 4$, and $m$ is even, then $mH_{n,p}$ does not have a distance magic labeling.
	\end{enumerate}
\end{theorem}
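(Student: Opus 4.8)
The plan is to reduce both parts to a single bookkeeping identity and then split into a construction (for (i)) and a parity obstruction (for (ii)). In $mH_{n,p}$ a vertex $u$ lying in the $i$-th partite set $P_i$ of the $j$-th copy is adjacent exactly to the $(p-1)n$ vertices of that copy outside $P_i$, and to nothing in the other copies. Hence, writing $T_j$ for the sum of the labels placed on copy $j$ and $\sigma_{j,i}$ for the sum of the labels on $P_i$ of copy $j$, the weight of $u$ is $w(u)=T_j-\sigma_{j,i}$. Consequently a bijection $f\colon V(mH_{n,p})\to\{1,\dots,mnp\}$ is a distance magic labeling as soon as all $mp$ partite sets (taken over every copy) receive a common sum $\sigma$: then each copy has total $p\sigma$ and every weight equals $(p-1)\sigma$, a constant.

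For part (i) this reduces the problem to partitioning $\{1,\dots,mnp\}$ into $mp$ blocks of size $n$, each of sum $\sigma=\tfrac{n(mnp+1)}{2}$, which under the hypothesis ``$n$ even or $mnp$ odd'' is a positive integer. When $n$ is even I would pair $k$ with $mnp+1-k$, obtaining $mnp/2$ pairs each of sum $mnp+1$, and place $n/2$ of them in every block. When $m,n,p$ are all odd (so $n,mp\ge 3$) I would instead take the columns of a magic rectangle of size $n\times mp$ on the symbols $1,\dots,mnp$ — one exists because $n\equiv mp\pmod 2$ and $n,mp>1$ — each column being a set of $n$ symbols realizing the common column sum $\sigma$. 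In either case one checks directly from $w(u)=T_j-\sigma_{j,i}$ that the resulting labeling has constant weight $(p-1)\sigma$, which proves (i).

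For part (ii), suppose $f$ is a distance magic labeling of $mH_{n,p}$ with constant $c$. From $w(u)=T_j-\sigma_{j,i}=c$ the quantity $\sigma_{j,i}=T_j-c$ is independent of $i$; summing over the $p$ partite sets of copy $j$ gives $T_j=p(T_j-c)$, so $T_j=\tfrac{pc}{p-1}$ takes the same value $T$ for every copy. Then $mT=\tfrac{mnp(mnp+1)}{2}$, whence $T=\tfrac{np(mnp+1)}{2}$ and each partite-set sum equals $T-c=T/p=\tfrac{n(mnp+1)}{2}$. But $\sigma_{j,i}$ is a sum of $n$ of the labels, hence an integer, while for $np$ odd and $m$ even both $n$ and $mnp+1$ are odd, so $\tfrac{n(mnp+1)}{2}\notin\mathbb Z$ — a contradiction. (Only ``$n$ odd and $m$ even'' is actually used here; the extra hypothesis $p\equiv 3\pmod 4$ is not needed.)

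The parity bookkeeping in (ii) and the pairing in (i) are routine; the one step that requires real input is the balanced partition of $\{1,\dots,mnp\}$ in the all-odd case of (i). I would obtain it from the known existence theorem for magic rectangles $MR(n,mp)$; alternatively a direct Skolem-type argument works — peel off the first $3mp$ integers into $mp$ equal-sum triples and pair up the remaining $(n-3)mp$ integers — giving the blocks explicitly. Everything else collapses to the weight identity $w(u)=T_j-\sigma_{j,i}$ together with elementary counting.
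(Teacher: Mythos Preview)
This theorem is quoted in the paper as a result of Shafiq \emph{et al.}\ \cite{shafiq2009distance}; the paper does not supply its own proof, so there is nothing to compare your argument against directly.

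That said, your argument is correct. The reduction to the weight identity $w(u)=T_j-\sigma_{j,i}$ and the consequence that a distance magic labeling of $mH_{n,p}$ is equivalent to an equal-sum partition of $\{1,\dots,mnp\}$ into $mp$ blocks of size $n$ is exactly the right framework. For (i), the pairing when $n$ is even and the appeal to the Harmuth magic-rectangle existence theorem when $n,mp$ are both odd and exceed~$1$ are standard and sufficient. For (ii), your derivation that every partite set must carry sum $\tfrac{n(mnp+1)}{2}$ is clean, and you are right that the parity obstruction already fires under the weaker hypothesis ``$n$ odd and $m$ even''; the assumptions $p$ odd and $p\equiv 3\pmod 4$ are indeed superfluous for this direction (and the paper itself later invokes the stronger non-existence result of Froncek \emph{et al.}, Theorem~\ref{froncek}, to cover the case $p\equiv 1\pmod 4$).
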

\begin{theorem}\label{multi1}
	\textnormal{\cite{shafiq2009distance}}
	Let $m\geq 1, n>1$ and $p\geq 3.$ $mC_{p}[\bar{K_{n}}]$ has a distance magic labeling if and only if either $n$ is even or $mnp$ is odd or $n$ is odd and $p \equiv 0 \mod 4.$
\end{theorem}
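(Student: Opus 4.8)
The plan is to reduce everything to partitioning $\{1,\dots,mnp\}$ into blocks of size $n$ with prescribed sums. Note $mC_p[\bar{K_n}]=(mC_p)[\bar{K_n}]$; call the $mp$ independent sets lying over the vertices of $mC_p$ the \emph{cells}, write them as $V_0^k,\dots,V_{p-1}^k$ in cyclic order in the $k$-th copy of $C_p$, and for a bijection $f\colon V\to\{1,\dots,mnp\}$ set $s_i^k=\sum_{v\in V_i^k}f(v)$. Since $N(v)=V_{i-1}^k\cup V_{i+1}^k$ for every $v\in V_i^k$, the labeling $f$ is distance magic with constant $c$ if and only if $s_{i-1}^k+s_{i+1}^k=c$ for all $k$ and all $i$ (indices mod $p$). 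I would first solve this recurrence: $s_{i+2}^k=c-s_i^k$ gives $s_i^k=s_{i+4}^k$, so $s_i^k$ depends only on $i$ modulo $\gcd(4,p)$. Hence if $p\not\equiv0\pmod4$ all $mp$ cell sums must equal $c/2$, whereas if $p\equiv0\pmod4$ the cell sums around the $k$-th copy are forced to be $x_k,y_k,c-x_k,c-y_k$ (repeated $p/4$ times), with $x_k,y_k$ free.

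For the ``only if'' direction, suppose $mC_p[\bar{K_n}]$ is distance magic while $n$ is odd, $mnp$ is even, and $p\not\equiv0\pmod4$. Then $p$ is odd or $p\equiv2\pmod4$, so by the recurrence all cell sums equal $c/2$; summing over the $mp$ cells gives $c=n(mnp+1)$, which is odd because $n$ is odd and $mnp$ is even, so $c/2\notin\mathbb Z$ --- a contradiction. This excludes exactly the parameter sets not allowed by the theorem.

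For the ``if'' direction I would handle the three permitted cases separately. If $n$ is even, pair the labels $i\leftrightarrow mnp+1-i$ and put $n/2$ pairs into each cell, so every cell sum is $\tfrac n2(mnp+1)$ and every weight is $n(mnp+1)$; alternatively, quote Theorem~\ref{millerregularcomp} for the $2$-regular graph $mC_p$. If $mnp$ is odd then $m,n,p$ are all odd, $c=n(mnp+1)$ is even, and it suffices to partition $\{1,\dots,mnp\}$ into $mp$ blocks of size $n$ each summing to $\tfrac{n(mnp+1)}{2}$ and to drop one block into each cell; I would build such a balanced partition of an initial segment into equal-size, equal-sum blocks by an explicit interleaving, which works whenever the block size is at least $2$ and the common sum is an integer. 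The case $n$ odd, $p\equiv0\pmod4$ is the crux: here $c=n(mnp+1)$ is \emph{odd}, so an equal-sum partition is impossible and one must use the pattern $x,y,c-x,c-y$. I would fix a single pair $x,y$, pair the labels as before, group the $mnp/2$ pairs into $mp/2$ super-groups of $n$ pairs, and split each super-group into two size-$n$ blocks with complementary sums (the small/large split, then a few pairs swapped across the split to steer the sum to $x$ or to $y$), using $mp/4$ super-groups for the $(x,c-x)$ blocks and $mp/4$ for the $(y,c-y)$ blocks; laying the four block-types periodically around each $C_p$ produces the labeling.

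The hard part will be that last construction. Because $c$ is forced odd, one cannot just pair labels into equal-sum blocks, and the work is to show a pair $x,y$ can be chosen (with $x+y$ coming out odd, so $x\ne y$) for which $\{1,\dots,mnp\}$ really does split into $mp/4$ size-$n$ blocks of each sum $x,\,y,\,c-x,\,c-y$. The cases with few cells per copy --- $p=4$, and $m$ small --- give the least slack for perturbing the small/large split and will have to be verified directly; everything else is parity bookkeeping parallel to the ``only if'' computation.
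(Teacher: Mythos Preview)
This theorem is not proved in the present paper; it is quoted from Shafiq \emph{et al.}\ as a background result, so there is no in-paper proof to compare your attempt against.

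Evaluated on its own, your necessity argument is clean and correct: the recurrence $s_{i+2}^k=c-s_i^k$ forces the cell sums to have period dividing $\gcd(4,p)$, so for $p\not\equiv0\pmod4$ every cell sum equals $c/2$; summing over all cells gives $c=n(mnp+1)$, and with $n$ odd and $mnp$ even this is odd, the required contradiction. For sufficiency, the case $n$ even follows at once from Theorem~\ref{millerregularcomp} applied to the $2$-regular graph $mC_p$, and the case $mnp$ odd reduces correctly to the standard equal-sum partition of $\{1,\dots,mnp\}$ into $mp$ blocks of size $n$ (the target block sum $\tfrac{n(mnp+1)}{2}$ is an integer precisely because $mnp$ is odd, and any placement of the blocks into the cells then works). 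The remaining case, $n$ odd and $p\equiv0\pmod4$, is only an outline: your pairing/super-group scheme is a reasonable plan, but you have not exhibited a concrete choice of $x,y$ nor shown that a partition into size-$n$ blocks with sums $x,\,y,\,c-x,\,c-y$ actually exists for all admissible $m,n,p$; as you yourself flag, the small-$p$ and small-$m$ instances leave little slack for the swap adjustments and would have to be checked directly. So the proposal is a correct blueprint, with the last construction still to be filled in.
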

In \cite{shafiq2009distance}, Shafiq $et$ $al$. posted a problem on the graph $mH_{n,p}$.
\begin{problem}
	For the graph $mH_{n,p}$, where $m$ is even, $n$ is odd, $p\equiv 1
	\mod 4,$ and $p > 1$, determine if there is a distance magic labeling.
\end{problem}
Later, Froncek $et$ $al$.\cite{froncek2011constructing} proved the following necessary condition for $mH_{n,p}$.
\begin{theorem}\label{froncek}
	The graph $mH_{n,p}$, where $m$ is even, $n$ is odd, $p\equiv 1
	\mod 4,$ and $p > 1$, is not distance magic.
\end{theorem}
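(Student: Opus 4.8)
The plan is to suppose, for contradiction, that $mH_{n,p}$ admits a distance magic labeling and then to extract a parity obstruction from the sums of the labels on the partite sets. First I would set up notation: write the $m$ copies of $H_{n,p}$ as $G_1,\dots,G_m$, let $P_{j,1},\dots,P_{j,p}$ denote the partite sets of $G_j$ (each of size $n$), and, for a hypothetical distance magic labeling $f$ with magic constant $c$, put $S_{j,k}=\sum_{v\in P_{j,k}}f(v)$ and $T_j=\sum_{k=1}^{p}S_{j,k}$.

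The crucial step is to exploit the rigidity of the complete-multipartite structure. Every vertex of $P_{j,k}$ is adjacent in $H_{n,p}$ to exactly the vertices lying in the other $p-1$ partite sets of the same copy, so its weight is $T_j-S_{j,k}$. Imposing $w(v)=c$ on all vertices forces $S_{j,k}=T_j-c$ for every $k$; that is, all $p$ partite sets of a fixed copy have the same label-sum. Summing this over $k$ gives $T_j=p(T_j-c)$, whence $T_j=pc/(p-1)$ and therefore $S_{j,k}=c/(p-1)=:S$, the same value for every partite set of every copy. (Here the hypothesis $p>1$ is precisely what makes the division by $p-1$ legitimate.)

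Next I would count globally. The $mp$ partite sets partition $V(mH_{n,p})$, which carries the labels $1,\dots,mnp$, so
$$mpS=\frac{mnp(mnp+1)}{2},\qquad\text{hence}\qquad S=\frac{n(mnp+1)}{2}.$$
Finally I would invoke the hypotheses: $m$ is even and both $n$ and $p$ are odd, so $mnp$ is even, $mnp+1$ is odd, and $n(mnp+1)$ is odd; thus $S\notin\mathbb{Z}$, contradicting that $S$ is a sum of integer labels. This contradiction completes the proof.

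I do not anticipate a real obstacle: the argument is a short global count, and the only point needing care is the rigidity step that pins every partite-set sum to the single value $c/(p-1)$. It is worth noting that the argument uses nothing about $p$ modulo $4$ beyond the parity of $p$, so it in fact shows that $mH_{n,p}$ (with $n,p>1$) fails to be distance magic whenever $n$ is odd and $mnp$ is even; combined with Theorem \ref{multi}(i) this yields a complete characterisation of the distance magic graphs of the form $mH_{n,p}$.
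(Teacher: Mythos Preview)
Your argument is correct. Note, however, that the paper does not actually prove this theorem: it is quoted from Fron\v{c}ek \emph{et al.}~\cite{froncek2011constructing} as a known result, so there is no in-paper proof to compare against. Your parity obstruction via the common partite-set sum $S=n(mnp+1)/2$ is the standard short argument for this fact, and your closing observation is also right: the proof uses only that $p$ is odd (not the full hypothesis $p\equiv 1\pmod 4$), so together with Theorem~\ref{multi}(i) it yields a complete characterisation of which $mH_{n,p}$ are distance magic.
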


\par
For more details and results, one can refer Arumugam $et$ $al$. \cite{arumugamsurvey2012distance}.
\par From Theorem \ref{oddregular}, one can observe that any odd-regular graph $G$ of order $n$ is not distance magic. But if we label the graph with respect to a different set $S$ of positive integers with $|S|=n$, then $G$ may admit a magic labeling with a magic constant $c'$. See Figure 1.
\begin{figure}[htbp]
	\centering
	\includegraphics[width=40mm]{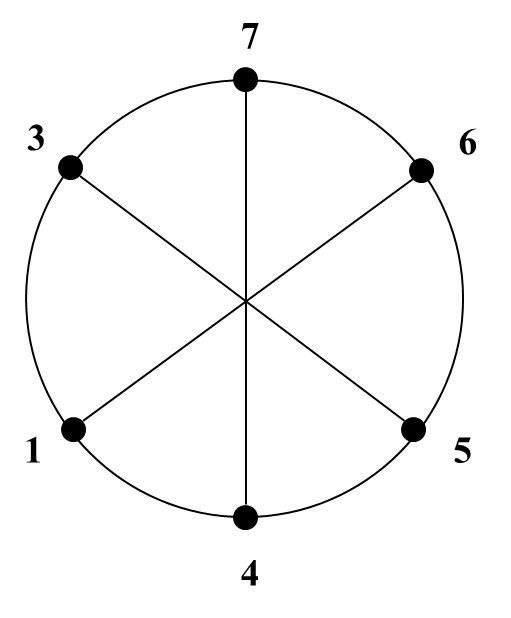}
	\caption{A graph $G$ with $c'=13$ and $S=\{1,3,4,5,6,7\}.$}
	\label{fig1}
\end{figure}
\par Motivated by this fact Godinho $et$ $al.$ \cite{godinho2015s} defined the concept of $S$-magic labeling of a graph.
\begin{definition}
	\textnormal{\cite{godinho2018distance}} Let $G=(V(G),E(G))$ be a graph and let $S$ be a set of positive integers with $|V(G)|=|S|$. Then $G$ is said to be $S$-magic if there exists a bijection
	$f:V(G)\rightarrow S$ satisfying $\sum_{v\in N(u)} f(v) = c$ $($a constant$)$ for every $u\in V(G)$. The constant $c$ is called the $S$-magic constant.
\end{definition}
\begin{definition}
	\textnormal{\cite{godinho2018distance}}
	Let $\alpha(S)$ = max $\{s : s \in S\}$. Let $i(G)$ = min $\alpha(S)$, where the minimum is taken over all sets $S$ for which the graph $G$ admits an $S$-magic labeling. Then $i(G)-|V(G)|$ is called the distance magic index of a graph $G$ and is denoted by $\theta(G)$.
\end{definition}
\par From above definitions, one can observe that a graph $G$ is distance magic if and only if $\theta(G) = 0$ and if $G$ is not $S$-magic for any $S$ with $|V(G)|=|S|$, then $\theta(G)=\infty$.
\par Let $G$ be a graph for which $\theta(G)$ is finite (however so small) and non-zero. Now, a natural question arises that for all such graphs $G$, does there exist an $S$-magic labeling with $\theta(G)=1$?
\par In the following section, we prove some necessary conditions for an $r$-regular $S$-magic graph $G$ to have $\theta(G)=1$. Further, we compute the distance magic index of disjoint union of $m$ copies of $H_{n,p}$ and disjoint union of $m$ copies of $C_{p}[\bar{K_{n}}]$, where $m\geq 1$. Also, for any arbitrary regular graph $G$, we  compute the distance magic index of the graph $G[\bar{K_{n}}]$. In addition to that, we construct twin sets $S$ and $S'$ for the same graph $H_{n,p}$ with $\theta(G)=1$, for which $H_{n,p}$ is both $S$-magic and $S'$-magic with  distinct magic constants. We also discuss the maximum and minimum bounds attained by the magic constant for the graph $H_{n,p}$.
\section{Main results}
If $G$ is a graph with $\theta(G) = 1$, then it is clear that $G$ is $S$-magic for $S = \{1,...,n+1\}\setminus \{a\}$, for at least one $a\in \{1,...,n\}$. We call $a$, the deleted label of $S$.
\par The following results are similar to that of Theorem \ref{oddregular}
and \ref{2mode4}.
\begin{lemma}\label{oddreg2}
	If $G$ is an odd $r$-regular $S$-magic graph with $\theta(G) = 1$, then $a\neq 1$.
\end{lemma}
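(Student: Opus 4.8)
The plan is to reproduce, for the shifted label set, the parity obstruction underlying Theorem~\ref{oddregular}. Assume for contradiction that the deleted label is $a=1$, and put $n=|V(G)|$, so that the labeling set is $S=\{2,3,\dots,n+1\}$ and $\sum_{s\in S}s=\frac{(n+1)(n+2)}{2}-1=\frac{n(n+3)}{2}$. First I would observe that $n$ must be even: since $G$ is $r$-regular with $r$ odd, the handshaking identity $rn=2|E(G)|$ is even, so $n\equiv 0\pmod 2$. This is the only place the hypothesis ``$r$ odd'' is used, and it is the step that requires the most care to phrase cleanly.

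Next I would double-count vertex weights. Let $f\colon V(G)\to S$ be the $S$-magic bijection with constant $c$. Summing $w_G(u)=\sum_{v\in N_G(u)}f(v)$ over all $u\in V(G)$ counts each value $f(v)$ exactly $\deg_G(v)=r$ times, so
\[
nc=\sum_{u\in V(G)}w_G(u)=r\sum_{v\in V(G)}f(v)=r\sum_{s\in S}s=\frac{rn(n+3)}{2},
\]
which yields $c=\frac{r(n+3)}{2}$. Since $n$ is even, $n+3$ is odd, and $r$ is odd by hypothesis, the product $r(n+3)$ is odd, so $c$ is not an integer. But $c$ is a sum of positive integer labels, a contradiction; hence $a\neq 1$.

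I expect no genuine obstacle here, as the whole argument reduces to a one-line parity check after the double count. It is, however, worth noting \emph{why} the statement singles out $a=1$: for a general deleted label $a$ one computes $c=\frac{r(n+1)}{2}+\frac{r(n+1-a)}{n}$, and $a=1$ is precisely the value for which the correction term $\frac{r(n+1-a)}{n}=r$ is an integer, so the half-integral part $\frac{r(n+1)}{2}$ (forced by $r$ odd, $n$ even) survives. For other small values of $a$ this correction term may itself be a half-integer and cancel the obstruction, which is exactly why the lemma is confined to $a=1$.
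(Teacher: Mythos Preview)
Your proof is correct and follows essentially the same approach as the paper: derive $nc=r\sum_{s\in S}s$, substitute $a=1$ to get $c=\frac{r(n+3)}{2}$, and conclude this is not an integer. You are in fact more careful than the paper, which leaves implicit the handshaking step that forces $n$ to be even.
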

\begin{proof}
	Assume that $G$ is an $r$-regular graph with $\theta(G) = 1$,  where $r$ is odd. If  $S = \{1,...,n+1\}\setminus \{a\}$ with the $S$-magic constant $c$, then,
	\begin{eqnarray}
		nc&=& r(1+...+n+1)-ra \\
		%nc&=& \frac{r(n+1)(n+2)-2ra}{2} \\
		c&=& \frac{rn+3r}{2}+\frac{r-ra}{n}.
	\end{eqnarray}
	Therefore, if $a = 1$, then $c$ is not an integer, a contradiction.
\end{proof}
\begin{lemma}\label{2mod42}
	If $G$ is an $r$-regular $S$-magic graph with $\theta(G) = 1$ and $r,n \equiv 2 \mod 4$, then $a$ is an even integer, $a\neq 2,n$.
\end{lemma}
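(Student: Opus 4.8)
The plan is to combine the integrality constraint on the $S$-magic constant $c$ coming from the weight-sum identity (exactly as in the proof of Lemma~\ref{oddreg2}) with a parity count in the spirit of Theorems~\ref{oddregular} and~\ref{2mode4}.

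First I would record the precise form of $c$. Summing $w_G(u)=c$ over all $n$ vertices counts every label exactly $r$ times, so $nc = r\bigl(\tfrac{(n+1)(n+2)}{2}-a\bigr)$, which rearranges to $c = \frac{rn+3r}{2} + \frac{r(1-a)}{n}$ as computed in the proof of Lemma~\ref{oddreg2}. Since $r\equiv n\equiv 2 \pmod 4$, write $r=2r'$ and $n=2n'$ with $r'$ and $n'$ odd; because $G$ is simple and $r$-regular with $r$ even we have $2\le r\le n-2$, hence $1\le r'\le n'-1$. Now $\frac{rn+3r}{2}=r'(n+3)$ is an odd integer, so $c\in\mathbb{Z}$ forces $n\mid r(1-a)$, i.e. $n'\mid r'(1-a)$; moreover, since $n'$ is odd, the integer $\frac{r(1-a)}{n}=\frac{r'(1-a)}{n'}$ has the same parity as $r'(1-a)$, which is odd precisely when $a$ is even. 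Thus $c$ is even if and only if $a$ is even.

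Next I would show $a$ is even by a parity count. Split $V(G)$ into the set $O$ of vertices carrying an odd label and the set $E$ carrying an even label. Among $\{1,\dots,n+1\}$ there are $n/2+1$ odd values and $n/2$ even values, so deleting $a$ leaves $|O|=n/2+1$ if $a$ is even and $|O|=n/2$ if $a$ is odd; since $n\equiv 2\pmod 4$ makes $n/2$ odd, $|O|$ is odd exactly when $a$ is odd. For every $u$ we have $w_G(u)\equiv |N_G(u)\cap O|\pmod 2$, so constancy of the weight gives $|N_G(u)\cap O|\equiv c\pmod 2$ for all $u$. Summing over $u\in O$, the total $\sum_{u\in O}|N_G(u)\cap O|$ equals twice the number of edges inside $O$, hence is even; but it is also congruent to $|O|\cdot c\pmod 2$. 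If $a$ were odd, both $|O|$ and $c$ would be odd, forcing this total to be odd — a contradiction. Hence $a$ is even.

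Finally, with $a$ even, I would return to the divisibility $n'\mid r'(1-a)$ to exclude $a=2$ and $a=n$. For $a=2$ it says $n'\mid r'$, impossible since $1\le r'\le n'-1$. For $a=n$, note $1-a=1-2n'\equiv 1 \pmod{n'}$, so again $n'\mid r'$, the same impossibility. Therefore $a$ is an even integer with $a\neq 2$ and $a\neq n$. The only delicate point in the whole argument is the passage from the fractional expression for $c$ to its parity, which relies on $n'$ being odd; everything else is routine parity bookkeeping.
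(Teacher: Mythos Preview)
Your argument is correct and, in fact, tighter than the paper's. Both proofs rest on the same two ingredients—the formula $c=\tfrac{rn+3r}{2}+\tfrac{r(1-a)}{n}$ and a handshaking count on the odd-labelled vertices—but they organise them differently. The paper handles $a=1$ by the parity/handshaking argument (its Case~1) and then, for every odd $a=2q+1$ with $q>0$ (its Case~2), asserts that the congruences $rn+3r\equiv 2\pmod 4$ and $r(1-a)\equiv 0\pmod 4$ already force $c\notin\mathbb{Z}$; that inference is not valid in general (e.g.\ $r=6$, $n=18$, $a=7$ yields $c=61\in\mathbb{Z}$), so the paper's Case~2 is incomplete as written. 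You sidestep this by first proving that whenever $c\in\mathbb{Z}$ and $a$ is odd one has $c$ odd (using that $n'$ is odd, so $\tfrac{r'(1-a)}{n'}$ inherits the parity of $r'(1-a)$), and then running the handshaking contradiction on $G[O]$ uniformly for \emph{all} odd $a$ rather than only $a=1$; this is exactly what is needed to close the gap. Your exclusion of $a\in\{2,n\}$ via $n'\mid r'$ with $1\le r'\le n'-1$ is essentially the paper's Case~3 made explicit.
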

\begin{proof}
	Assume that $G$ is an $r$-regular graph with $\theta(G) = 1$ and $r,n \equiv 2 \mod 4$. Let $c$ be the $S$-magic constant of $G$, where $S = \{1,2,...,n+1\}\setminus \{a\}$ and $a$ is an odd integer belonging to $\{1,2,..,n\}$. Let  $r = 4k+2$ and $n = 4k'+2$, with $0<k<k'$. \\
	\textbf{Case 1:} when $a=1$, from eq.(2), we have,
	\begin{eqnarray*}
	c = (2k+1)(4k'+5).
	\end{eqnarray*}
Here $c$ is an odd integer and every vertex is adjacent to odd number of vertices which are labeled with odd integers. Note that, here there are $2k'+1$ such vertices.
Then the graph induced by the vertices having odd label has every vertex of odd degree, a contradiction.\\
	\textbf{Case 2:} When $a= 2q+1$, with $q>0$. Then $rn+3r\equiv 2 \mod 4$ and $r-ra\equiv 0 \mod 4$ and hence $c$ fails to be an integer.\\ %	\begin{eqnarray*}
%		c&=& \frac{rn+3r}{2}+\frac{r-ra}{n}.
%		%c&=& \frac{(4k+2)(4k'+2)+3(4k+2)}{2}+\frac{(4k+2)-(4k+2)(2q+1)}{4k'+2}.\\
%		%c&=& \frac{4(4kk'+5k+2k'+2)+2}{2}-\frac{2(4kq+2q)}{2k'+1}.
%	\end{eqnarray*}
%	But, $rn+3r\equiv 2 \mod 4$ and $r-ra\equiv 0 \mod 4$.
%	%		\begin{eqnarray*}
%	%	%	c&=& \frac{rn+3r}{2}+\frac{r-ra}{n}\\
%	%		c&=& \frac{(4k+2)(4k'+2)+3(4k+2)}{2}+\frac{(4k+2)-(4k+2)(2q+1)}{4k'+2}.\\
%	%		c&=& \frac{4(4kk'+5k+2k'+2)+2}{2}-\frac{2(4kq+2q)}{2k'+1}.
%	%		\end{eqnarray*}
\textbf{Case 3:} When $a = 2$ or $a=n$, $c$ is not an integer and hence the result follows.
\end{proof}
\par The following theorem discusses the distance magic index of the graph, $H_{n,p}, n>1$ and $p>1$. We define the integer-valued function $\alpha$ given by\\
\[ \alpha(j)= \left\{
\begin{array}{ll}
0 &  $ for $ j $  even $ \\
1  &  $ for $ j $ odd, $ \\
\end{array}
\right.\]
and the sets $\Omega_k=\{i: 5\leq i\leq n-1 \textnormal{~and~} i\equiv k\mod 4\}$, where $k\in\{0,1,2,3\}$. Both $\alpha$ and $\Omega_i$'s are used in the next theorem.
\begin{theorem}\label{index1hnp}
	If $G$ is a complete multi-partite graph $H_{n,p}$ with $p$ partitions having $n$ vertices in each partition, then
	\[  \theta(G) = \left\{
	\begin{array}{ll}
	0 &  $ for $n$ even or $n$ and $p$ both odd$ \\
	1 &  $ for $n$ odd and $p$ even.$ \\
	\end{array}
	\right.\]
\end{theorem}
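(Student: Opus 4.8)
The plan is to split the statement into its two halves. The first case, $n$ even or $n,p$ both odd, is immediate: by Theorem~\ref{millerHnp} the graph $H_{n,p}$ has a distance magic labeling in these ranges, so $\theta(G)=0$ by definition. (The degenerate sub-cases $p=1$ or $n=1$ are excluded by the hypotheses $n>1$, $p>1$.) So the real content is the case $n$ odd and $p$ even, where we must show two things: (a) $\theta(G)\neq 0$, and (b) $\theta(G)\leq 1$, i.e. we exhibit an explicit $S$ with $\alpha(S)=n p+1$ for which $H_{n,p}$ is $S$-magic.

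Part (a) is free: when $n$ is odd and $p$ is even, Theorem~\ref{millerHnp} says $H_{n,p}$ is \emph{not} distance magic, so $\theta(G)\geq 1$. For part (b), I would take $S=\{1,2,\dots,np+1\}\setminus\{a\}$ for a suitably chosen deleted label $a$, and construct a bijection $f$ from $V(H_{n,p})$ onto $S$ that is constant-sum on neighbourhoods. The key structural fact about $H_{n,p}$ is that $N(u)$, for $u$ in part $P_t$, is exactly $V(G)\setminus P_t$; hence $w_G(u)=\big(\sum_{v\in V(G)}f(v)\big)-\sigma(P_t)$, where $\sigma(P_t)=\sum_{v\in P_t}f(v)$ is the \emph{part-sum}. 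So $f$ is $S$-magic \emph{iff} all $p$ part-sums are equal. Thus the whole problem reduces to: partition the set $S$ (of size $np$) into $p$ blocks of size $n$, each of the same sum. Since $n$ is odd, one natural approach is to first handle the blocks using consecutive-integer Skolem-type groupings of $\{1,\dots,np\}$ into equal-sum $n$-subsets (which exists exactly when the total $\binom{np+1}{2}$ is divisible by $p$, forcing a parity/congruence condition on the deleted label $a$), and then perform a local ``swap'' that replaces one element by $np+1$ and discards $a$ while preserving all part-sums. Choosing $a$ so that $a\equiv np+1$ modulo the relevant parities, and so that $a$ and $np+1$ sit conveniently in the construction, is the design freedom we exploit.

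Concretely, I would build the $n$-subsets from $\{1,\dots,np\}$ by the standard trick of pairing small and large elements: for $n$ odd write $n=2s+1$; put one ``middle'' element and $s$ complementary pairs summing to $np+1$ into each block, so every block-sum is automatically $s(np+1)$ plus its middle element, and then choose the $p$ middle elements to themselves form an equal-residue spread. After this baseline equal-sum partition of $\{1,\dots,np\}$ into $p$ parts, I introduce $np+1$: pick the block containing $a$, delete $a$, insert $np+1$ — this raises that block's sum by $np+1-a$; to rebalance, I compensate by an exchange of elements among the blocks (e.g. shifting each of the other blocks up by a matched amount, using the slack created by the earlier pairings), which is possible precisely because $np+1-a$ is divisible by $p$ and small. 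One must check throughout that (i) the deleted label $a\in\{1,\dots,np\}$, (ii) all sets remain subsets of $\{1,\dots,np+1\}\setminus\{a\}$ of size $n$, and (iii) no element is used twice. Finally, by Lemma~\ref{oddreg2} and Lemma~\ref{2mod42} the deleted label cannot be too small or have the wrong parity, which confirms that the $a$ forced by the divisibility constraint is exactly the right (and essentially only) choice, so the bound $\theta(G)=1$ is tight rather than merely an upper estimate.

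The main obstacle I anticipate is the explicit equal-sum partitioning when $p$ carries awkward factors — in particular making the rebalancing exchange work uniformly for all even $p$ (not just $p\equiv 0\bmod 4$) and all odd $n>1$, while keeping every label inside the allowed range $\{1,\dots,np+1\}\setminus\{a\}$. A clean way around this is to reduce to small cases by a ``lifting'' argument: if $H_{n,p}$ is $S$-magic with $\alpha(S)=np+1$, one can often build an $S'$-magic labeling of $H_{n,p'}$ for $p'=p+$ (small) by appending full equal-sum blocks drawn from a fresh consecutive interval; so it suffices to settle $p=2$ and $p=4$ by hand and then propagate. I would organize the proof this way: prove the reduction-to-part-sums lemma, settle $p\in\{2,4\}$ explicitly with a displayed construction, and then induct on $p$ in steps of $2$.
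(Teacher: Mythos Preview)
Your reduction is exactly the one the paper uses: an $S$-magic labeling of $H_{n,p}$ is the same datum as an $n\times p$ array with distinct entries from $S$ and constant column sum, and for $n$ odd, $p=2m$ even you must take $S=\{1,\dots,np+1\}\setminus\{a\}$ with $a$ determined by the requirement that the total be divisible by $p$. Where you diverge from the paper is in how the array is actually produced. The paper does not argue structurally or inductively; it simply writes down closed-form entries $a_{i,j}$ (separate explicit $3\times 2m$ and $5\times 2m$ matrices, and for odd $n>5$ a general formula split by the residue of the row index $i$ modulo $4$), records the deleted label $a=m(2n-1)+1$, and verifies the common column sum $\tfrac{n^2p+n+1}{2}$ by direct summation.

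Your proposed construction, by contrast, has two real gaps. First, the ``$s$ complementary pairs plus one middle element'' scheme does \emph{not} yield equal block sums: as you yourself compute, each block sum equals $s(np+1)$ plus its middle element, so the $p$ block sums differ precisely by the $p$ distinct middle elements, and no ``equal-residue spread'' of distinct integers can make those equal. Fixing this forces you to break the symmetry of the pairing---shifting some pairs between blocks to absorb the discrepancies---and that is the entire difficulty; it is exactly what the paper's row-by-row formulas encode. Second, the induction on $p$ in steps of $2$ by ``appending fresh equal-sum blocks from a consecutive interval'' cannot work as stated: the target part-sum for $H_{n,p+2}$ is $\tfrac{n^2(p+2)+n+1}{2}$, which exceeds the part-sum you already have by $n^{2}$, so every one of the $p$ existing blocks must be modified, not merely carried over; and two blocks cut from the fresh interval $\{np+2,\dots,n(p+2)+1\}$ have sum about $n^{2}p+n^{2}$, roughly twice the required value. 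Any honest repair of the inductive step rebuilds the whole array from scratch, so the induction saves nothing over a direct construction.
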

\begin{proof}
	Let $G\cong H_{n,p}$ with $n>1, p>1$. From Theorem \ref{millerHnp}, it is clear that if $n$ is even or when $n$ and $p$ both are odd, then $\theta(G)=0$.
	\par Now, to construct an $(n\times p)$- rectangular matrix $A=(a_{i,j})$ with distinct entries from a set $S$ having column sum $b$~(a constant) is equivalent to find an $S$-magic labeling of $G$ with magic constant $(p-1)b$.
	\par Note that $j^{th}$ column of $A$ can be used to label the vertices of $j^{th}$ partition of $G$ and hence $G$ admits a magic labeling with magic constant $(p-1)b$. In addition, if the entries of $A$ are all distinct and are from $S = \{1,...,np+1\}\setminus\{a\}$, where $a\in\{1,...,np\}$, then $G$ is $S$-magic with $\theta(G)= 1.$
	\par Let $n$ be an odd and $p$ be an even integer.\\\\
	Case 1: If $n = 3$ and $p = 2m, m>0$, then construct $A$ as,
	\begin{center}
		$\begin{pmatrix}
		1  & 2 & 3 & 4 & ... & 2m-3 & 2m-2 & 2m-1 & 2m\\
		3m & 4m & 3m-1 & 4m-1 & ... & 2m+2 & 3m +2 & 2m+1 & 3m+1\\
		6m +1 & 5m & 6m & 5m-1 & ... & 5m+3 & 4m+2 & 5m+2 & 4m+1
		\end{pmatrix}$
	\end{center}
	Note that, the deleted label is $5m+1$ here. One can observe that each column adds up to a constant $9m+2$ and thus, $\theta(H_{3,2m})=1$.\\\\
	Case 2: If $n = 5$ and $p = 2m, m>0$, then construct $A$ as,
	\begin{center}
		\small
		$\begin{pmatrix}
		1  & 2 & 3 & 4 & ... & 2m-3 & 2m-2 & 2m-1 & 2m\\
		3m & 4m & 3m-1 & 4m-1 & ... & 2m+2 & 3m +2 & 2m+1 & 3m+1\\
		6m & 5m & 6m-1 & 5m-1 & ... & 5m+2 & 4m+2 & 5m+1 & 4m+1\\
		7m & 8m & 7m-1 & 8m-1 & ... & 6m+2 & 7m+2 & 6m+1 & 7m+1\\
		9m+2 & 8m+1 & 9m+3 & 8m+2 & ... & 10m & 9m-1 & 10m+1 &  9m
		\end{pmatrix}$
	\end{center}
	%Note that here.
	Here, the deleted label is $9m+1$ and each column adds up to a constant $25m+3$. Therefore, $\theta(H_{5,2m})=1$.\\\\
	Case 3: If $n>5$ is odd and $p = 2m, m>0$, then for each $j\in\{1,...,p\}$, construct $A$ as follows.
	%\[  M_{ij} = \left\{
	%\begin{array}{ll}
	%j &  1\leq j \leq 2m, i=1 \\
	%
	%3m - (\frac{j-1}{2}) & $ for $ j $ odd, $ i=2 \\
	%4m - (\frac{j-2}{2}) & $ for $ j $ even, $ i=2 \\
	%6m - (\frac{j-1}{2}) & $ for $ j $ odd, $ i=3 \\
	%5m - (\frac{j-2}{2}) & $ for $ j $ even, $ i=3 \\
	%7m - (\frac{j-1}{2}) & $ for $ j $ odd, $ i=4 \\
	%8m - (\frac{j-2}{2}) & $ for $ j $ even, $ i=4 \\
	%2mi-2m+(\frac{j+1}{2}) & $ for $ j $ odd, $ i $ odd, $ i\equiv 1 \mod 4, 5\leq i\leq n-1\\
	%2mi-m+(\frac{j}{2}) & $ for $ j $ even, $ i $ odd, $ i\equiv 1 \mod 4, 5\leq i\leq n-1\\
	%2mi-m+(\frac{j+1}{2}) & $ for $ j $ odd, $ i $ even, $ i\equiv 2 \mod 4, 5\leq i\leq n-1\\
	%2mi-2m+(\frac{j}{2}) & $ for $ j $ even, $ i $ even, $ i\equiv 2 \mod 4,5\leq i\leq n-1\\
	%2mi-(\frac{j-1}{2}) & $ for $ j $ odd, $ i $ odd, $ i\equiv 3 \mod 4, 5\leq i\leq n-1\\
	%2mi-m-(\frac{j-2}{2}) & $ for $ j $ even, $ i $ odd, $ i\equiv 3 \mod 4, 5\leq i\leq n-1\\
	%2mi-m-(\frac{j-1}{2}) & $ for $ j $ odd, $ i $ even, $ i\equiv 0 \mod 4, 5\leq i\leq n-1\\
	%2mi-(\frac{j-2}{2}) & $ for $ j $ even, $ i $ even, $ i\equiv 0 \mod 4, 5\leq i\leq n-1\\
	%2mi-m+1+(\frac{j+1}{2}) & $ for $ j $ odd, $ i = n \equiv 1 \mod 4\\
	%2mi-2m+(\frac{j}{2}) & $ for $ j $ even$, i = n \equiv 1 \mod 4\\
	%2mi+1-(\frac{j-1}{2}) & $ for $ j $ odd$, i = n \equiv 3 \mod 4\\
	%2mi-m-(\frac{j-2}{2}) & $ for $ j $ even$, i = n \equiv 3 \mod 4.\\
	%\end{array}
	%\right.\]
	{\small
		\[  a_{i,j} = \left\{
		\begin{array}{ll}
		j & $ for $ i=1 \\
		(2i-1)m-(\frac{j-1}{2})+\alpha(j+1)(m+\frac{1}{2}) & $ for $ i=2,4 \\
		2mi-(\frac{j-1}{2})+\alpha(j+1)(-m+\frac{1}{2}) & $ for $ i=3 \\
		2mi-m+\frac{j}{2}+\alpha(j)(-m+\frac{1}{2}) & $ for $ i\equiv 1 \mod 4, i\in\{5,6,...,n-1\}\\
		2mi-m+\frac{j}{2}+\alpha(j)\frac{1}{2}- \alpha(j+1)m & $ for $ i\equiv 2 \mod 4, i\in\{5,6,...,n-1\}\\
		2mi-(\frac{j-1}{2})+\alpha(j+1)(-m+\frac{1}{2}) & $ for $ i\equiv 3 \mod 4, i\in\{5,6,...,n-1\}\\
		2mi-(\frac{j-1}{2})+\alpha(j)(-m)+\alpha(j+1)\frac{1}{2} & $ for $ i\equiv 0 \mod 4, i\in\{5,6,...,n-1\}\\
		2mi-m+\frac{j}{2}+\alpha(j)(\frac{3}{2})-\alpha(j+1)m & $ for $ i = n \equiv 1 \mod 4\\
		2mi-\frac{j}{2}+\frac{1}{2}-\alpha(j+1)(m+\frac{1}{2}) & $ for $ i = n \equiv 3 \mod 4\\
		\end{array}
		\right.\]
	}
	\noindent Therefore, $m(2n-1)+1$ is the deleted label in this case.\\\\
	Subcase 1: If $n\equiv 1\mod 4$, then $n-5\equiv 0\mod 4$. Let $n= 4q+5$, where $q\geq 1$. \\ Now for any fixed odd $j$, the $j^{th}$ column sum in $A$ is,
	\begin{eqnarray*}
		\sum_{i=1}^{n}a_{i,j} = \sum_{i=1}^{4}a_{i,j}+\sum_{i\in\Omega_1}a_{i,j}+\sum_{i\in\Omega_2}a_{i,j}+\sum_{i\in\Omega_3}a_{i,j}+\sum_{i\in\Omega_0} a_{i,j}+ \biggl(2mn-m+\frac{j}{2}+\frac{3}{2}\biggr)
	\end{eqnarray*}
	\noindent =$j+3m-(\frac{j-1}{2})+6m-(\frac{j-1}{2})+7m-(\frac{j-1}{2})+ \sum_{k=1}^q \biggl(2m(4k+1)-2m+\frac{j+1}{2}\biggr)+\\\sum_{k=1}^q \biggl(2m(4k+2)-m+\frac{j+1}{2}\biggr)+ \sum_{k=1}^q \biggl(2m(4k+3)-(\frac{j-1}{2})\biggr)+\\ \sum_{k=1}^q \biggl(2m(4k+4)-m-(\frac{j-1}{2})\biggr)+2mn-m+1+\frac{j+1}{2}\\
	= 15m+32mq+16mq^2+2mn+2q+3
	= \frac{n^2p+n+1}{2}$.\\\\
	Similarly, for any fixed even $j$, the $j^{th}$ column sum in $A$ is,
	\begin{eqnarray*}
		\sum_{i=1}^{n}a_{i,j} = \sum_{i=1}^{4}a_{i,j}+\sum_{i\in\Omega_1}a_{i,j}+\sum_{i\in\Omega_2}a_{i,j}+\sum_{i\in\Omega_3}a_{i,j}+\sum_{i\in\Omega_0}a_{i,j}+ \biggl(2mn-2m+\frac{j}{2}\biggr)
	\end{eqnarray*}
	$=j+4m-(\frac{j-2}{2})+5m-(\frac{j-2}{2})+8m-(\frac{j-2}{2})+ \sum_{k=1}^q \biggl(2m(4k+1)-m+\frac{j}{2}\biggr)+\\\sum_{k=1}^q \biggl(2m(4k+2)-2m+\frac{j}{2}\biggr)+\sum_{k=1}^q\biggl(2m(4k+3)-m-(\frac{j-2}{2})\biggr)+\\ \sum_{k=1}^q\biggl(2m(4k+4)-(\frac{j-2}{2})\biggr)+2mn-2m+\frac{j}{2}\\ = 15m+32mq+16mq^2+2mn+2q+3 = \frac{n^2p+n+1}{2}$.\\\\
	Subcase 2: if $n \equiv 3 \mod 4$, then $n-5 \equiv 2 \mod 4$. Let $n = 4q+3 $ where $q\geq0$. \\ Now, for any fixed odd $j$, the $j^{th}$ column sum in $A$ is,
	\begin{eqnarray*}
		\sum_{i=1}^{n}a_{i,j} = \sum_{i=1}^{4}a_{i,j}+\sum_{i\in\Omega_1}a_{i,j}+\sum_{i\in\Omega_2}a_{i,j}+\sum_{i\in\Omega_3}a_{i,j}+\sum_{i\in\Omega_0}a_{i,j}+\biggl(2mn-\frac{j}{2}+\frac{3}{2}\biggr)
	\end{eqnarray*}
	$=j+3m-(\frac{j-1}{2})+6m-(\frac{j-1}{2})+7m-(\frac{j-1}{2})+ \sum_{k=1}^{q+1} \biggl(2m(4k+1)-2m+\frac{j+1}{2}\biggr)+\\ \sum_{k=1}^{q+1} \biggl(2m(4k+2)-m+\frac{j+1}{2}\biggr)+\sum_{k=1}^q \biggl(2m(4k+3)-(\frac{j-1}{2})\biggr)+\\ \sum_{k=1}^q \biggl(2m(4k+4)-m-(\frac{j-1}{2})\biggr)+2mn+1-(\frac{j-1}{2}) \\ = 35m+48mq+16mq^2+2mn+2q+4 = \frac{n^2p+n+1}{2}$.\\\\
	Similarly, for any fixed even $j$, the $j^{th}$ column sum in $A$ is,
	\begin{eqnarray*}
		\sum_{i=1}^{n}a_{i,j} = \sum_{i=1}^{4}a_{i,j}+\sum_{i\in\Omega_1}a_{i,j}+\sum_{i\in\Omega_2}a_{i,j}+\sum_{i\in\Omega_3}a_{i,j}+\sum_{i\in\Omega_0}+\biggl(2mn-m-\frac{j}{2}+1\biggr)
	\end{eqnarray*}
	$=j+4m-(\frac{j-2}{2})+5m-(\frac{j-2}{2})+8m-(\frac{j-2}{2})+ \sum_{k=1}^{q+1} \biggl(2m(4k+1)-m+\frac{j}{2}\biggr)+\\ \sum_{k=1}^{q+1} \biggl(2m(4k+2)-2m+\frac{j}{2}\biggr)+ \sum_{k=1}^q \biggl(2m(4k+3)-m-(\frac{j-2}{2})\biggr)+\\ \sum_{k=1}^q \biggl(2m(4k+4)-(\frac{j-2}{2})\biggr)+2mn-m-(\frac{j-2}{2})\\ = 35m+48mq+16mq^2+2mn+2q+4 = \frac{n^2p+n+1}{2}$.\\\\
	\par Since the sum of the entries in each column of $A$ is $\frac{n^2p+n+1}{2}$ for odd $n>5$, $H_{n,2m}$ is $S$-magic with magic constant $\frac{n^2p+n+1}{2}(p-1)$ and $\theta(H_{n,2m})=1$.
\end{proof}
\vskip 0.5 cm
\begin{theorem}
	If $G\cong H_{n,p}$ is an $S$-magic graph with $\theta(G) = 1$ and $S$-magic constant $\frac{n^2p+n+1}{2}(p-1)$, then there exists a set $S'$ such that $G$ is an $S'$-magic graph with $\theta(G) = 1$ and $S'$-magic constant $\frac{n^2p+3n-1}{2}(p-1).$
\end{theorem}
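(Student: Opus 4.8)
The plan is to pass from the given $S$-magic labeling to its ``complement''. Recall that $H_{n,p}$ is $r$-regular with $r=n(p-1)$ (each vertex misses only its own part), and that for an $r$-regular graph on $N$ vertices any $S$-magic labeling $f$ with constant $c$ satisfies $Nc=\sum_{u}w_G(u)=\sum_{v}\deg(v)f(v)=r\sum_{s\in S}s$. Applying this with $N=np$, $S=\{1,\dots,np+1\}\setminus\{a\}$ and $c=\tfrac{n^2p+n+1}{2}(p-1)$ pins down the deleted label of $S$: a short computation gives $a=np+1-\tfrac p2$, which is an integer (as $p$ is even) with $2\le a\le np$; in particular $a\neq1$ (this also follows from Lemma \ref{oddreg2}, since $H_{n,p}$ is odd-regular in this case).

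Next I would set $S'=\{\,np+2-s:\ s\in S\,\}=\{1,\dots,np+1\}\setminus\{np+2-a\}$ and define $f'\colon V(G)\to S'$ by $f'(v)=(np+2)-f(v)$. Clearly $f'$ is a bijection onto $S'$, and for every $u\in V(G)$,
\[
\sum_{v\in N(u)}f'(v)=r(np+2)-\sum_{v\in N(u)}f(v)=r(np+2)-c ,
\]
a constant. Substituting $r=n(p-1)$ and $c=\tfrac{n^2p+n+1}{2}(p-1)$ and simplifying yields $r(np+2)-c=\tfrac{n^2p+3n-1}{2}(p-1)$, so $f'$ is an $S'$-magic labeling with exactly the asserted magic constant.

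It remains to verify $\theta(G)=1$ with respect to $S'$. Since $S'\subseteq\{1,\dots,np+1\}$ and $|S'|=np=|V(G)|$, it suffices that $np+1\in S'$, i.e.\ that the deleted label $np+2-a=1+\tfrac p2$ differs from $np+1$; this is immediate because $\tfrac p2\neq np$ when $n>1$. Hence $\alpha(S')=np+1=i(G)$ (the equality $i(G)=np+1$ being part of the hypothesis, or, independently, a consequence of Theorem \ref{millerHnp}), so $\theta(G)=1$. The whole argument is elementary; the only delicate point is making sure the complementation does not raise the deleted label up to $np+1$ — which would collapse $\theta(G)$ to $0$ — and the identity $a=np+1-\tfrac p2$ (equivalently Lemma \ref{oddreg2}) is exactly what prevents this. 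One may also note $n+1\neq 3n-1$ for $n>1$, so the two magic constants, and hence the labelings, are genuinely distinct.
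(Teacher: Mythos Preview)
Your proof is correct and follows essentially the same approach as the paper: both pass from the given labeling to its complement via the map $s\mapsto (np+2)-s$, obtaining $S'=\{1,\dots,np+1\}\setminus\{\tfrac{p}{2}+1\}$ and the new magic constant $r(np+2)-c$. Your derivation of the deleted label $a=np+1-\tfrac{p}{2}$ directly from the double-counting identity $Nc=r\sum_{s\in S}s$ is in fact a bit more self-contained than the paper's version, which reads $a$ off the explicit construction of Theorem~\ref{index1hnp}, but the core idea is identical.
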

\begin{proof}
	For every $S$-magic graph $G\cong H_{n,p}$ with $\theta(G) = 1$, one can obtain the corresponding rectangular matrix $A=(a_{i,j})$ associated with $G$ by Theorem \ref{index1hnp}.
	\par Define a new $(n\times p)$- rectangular matrix $A'=(a'_{i,j})$ with entries,
	\begin{eqnarray}\label{eq1}
	a'_{i,j} = (np+2) - a_{i,j} \textnormal{~for all}~i~ \textnormal{and}~j.
	\end{eqnarray}
	By Theorem \ref{index1hnp}, it is clear that the entries in $A$ belong to the set $\{1,...,np+1\}\setminus\{np+1-\frac{p}{2}\}$, which sum up to $\frac{n^2p^2+p(n+1)}{2}$ and is divisible by $p$. Hence the magic constant is $\frac{n^2p+n+1}{2}(p-1)$.\par
	Now using (\ref{eq1}), define the new set $S'= S\cup\{np+1-\frac{p}{2}\}\setminus \{\frac{p}{2}+1\}$ and the sum of all the entries in $A' = np(np+2)-(\frac{n^2p^2+p(n+1)}{2}) = \frac{n^2p^2+3np-p}{2},$ which is divisible by $p$. Therefore, we obtain the magic constant as $\frac{n^2p+3n-1}{2}(p-1)$.
\end{proof}
\vskip 0.5 cm
The rectangular matrices $A$ and $A'$ associated with $H_{5,6}$ are given below,\\\\
\begin{minipage}{7 cm}
	\begin{center}
		$A$ = $\begin{pmatrix}
		1 & 2  & 3 & 4  & 5 & 6 \\
		9 & 12 & 8 & 11 & 7 & 10 \\
		18 & 15 & 17 & 14 & 16 & 13	\\
		21 & 24 & 20 & 23 & 19 & 22	\\
		29 & 25 & 30 & 26 & 31 & 27 \\			
		\end{pmatrix}$
	\end{center}
\end{minipage}
\begin{minipage}{7 cm}
	\begin{center}
		$A'$ = $\begin{pmatrix}
		31 & 30  & 29 & 28  & 27 & 26 \\
		23 & 20 & 24 & 21 & 25 & 22 \\
		14 & 17 & 15 & 18 & 16 & 19	\\
		11 & 8 & 12 & 9 & 13 & 10	\\
		3 & 7 & 2 & 6 & 1 & 5 \\			
		\end{pmatrix}$
	\end{center}
\end{minipage}
\vskip 0.5 cm
\par
Here, the sum of the entries in each column of $A$ and $A'$ are 78 and 82 respectively. Then, $H_{5,6}$ is $S$-magic with magic constant 390 and $S'$-magic with magic constant 410.
\begin{figure}[htbp]
	\centering
	\includegraphics[width=140mm]{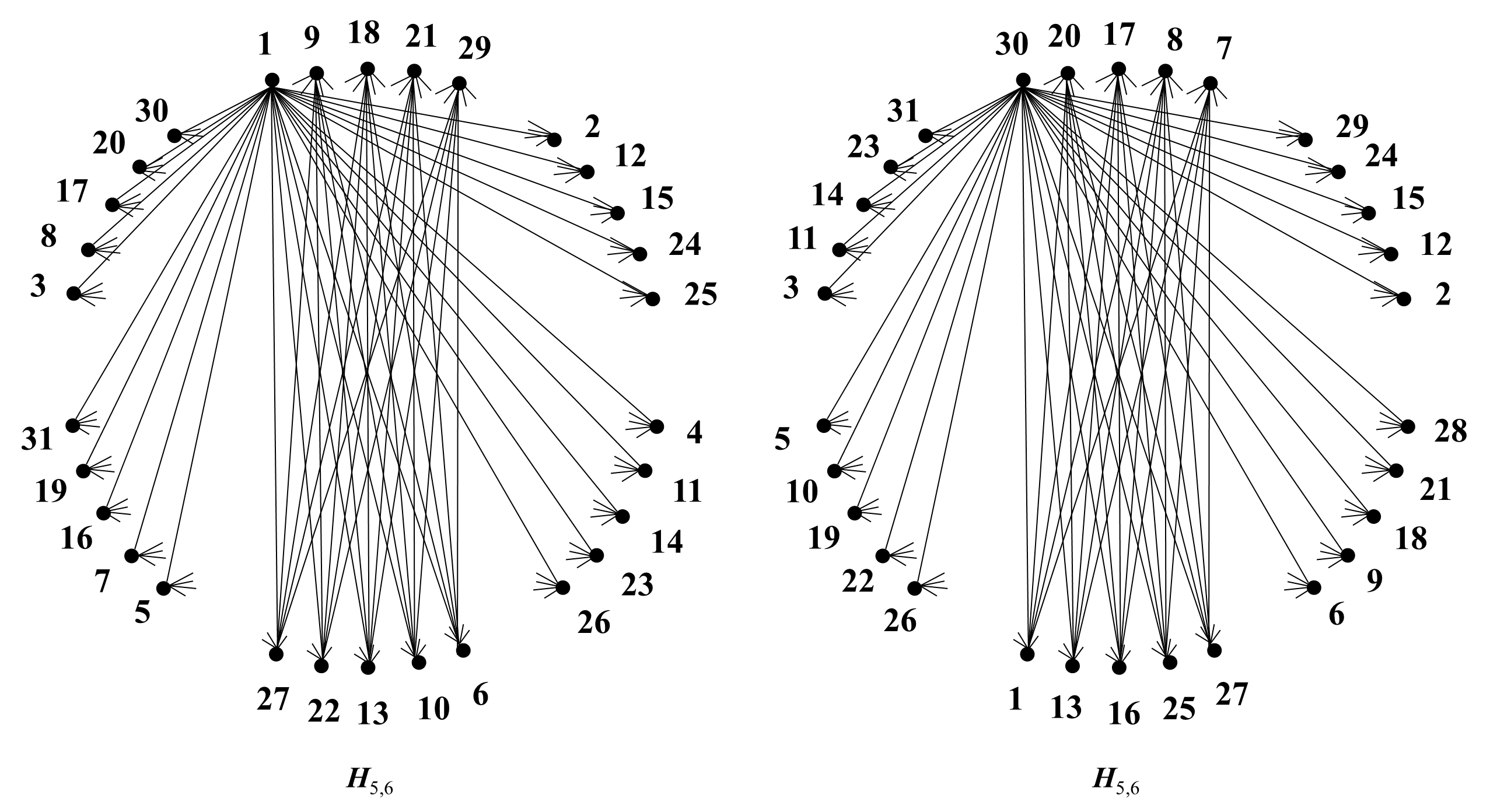}
	\caption{$H_{5,6}$ with $S$-magic constant $390$ and $S'$ magic constant 410.}
	\label{fig3}
\end{figure}
%\begin{figure}[t]
%	\centering
%	\includegraphics[width=80mm]{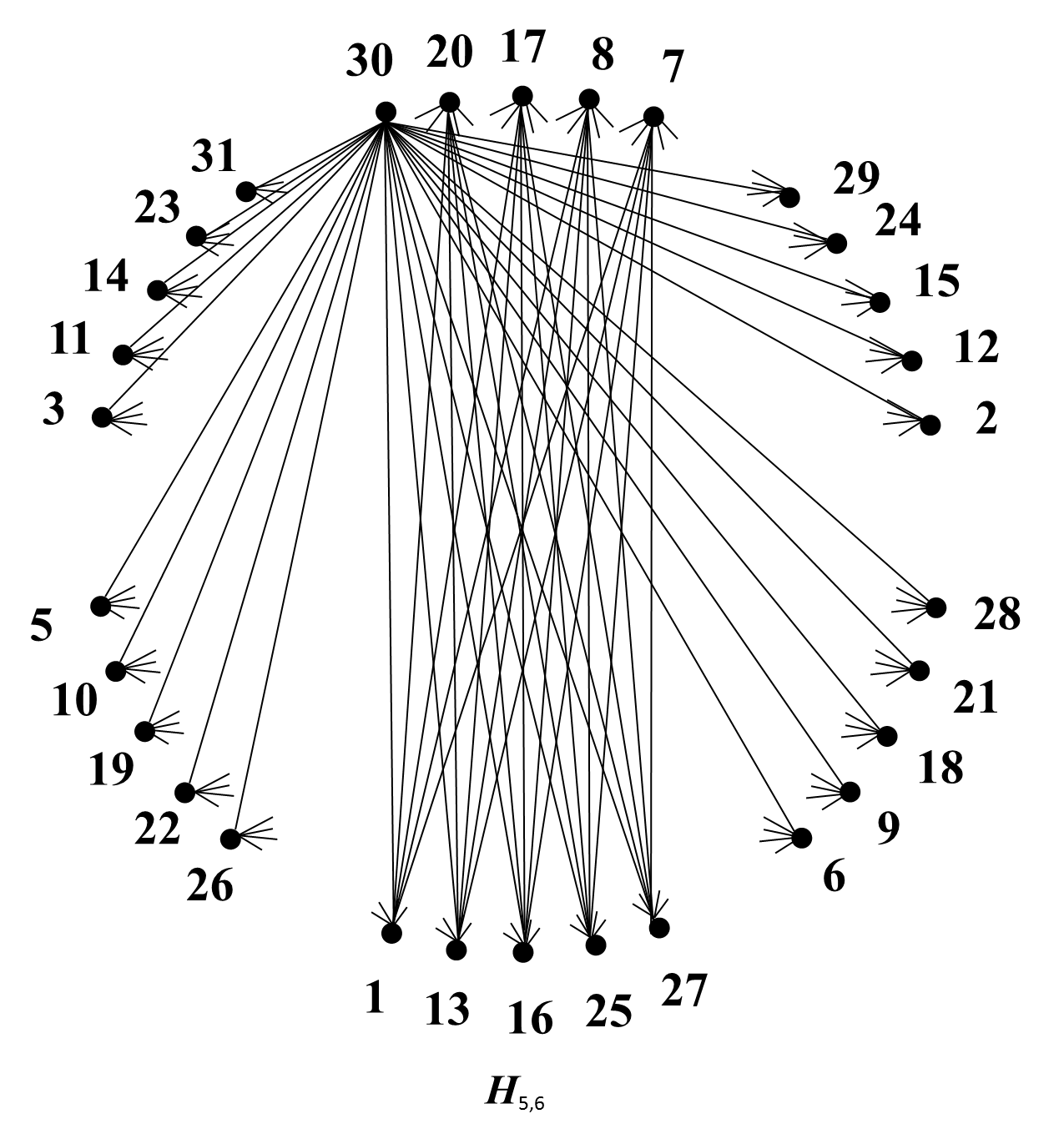}
%	\caption{$H_{5,6}$ with $S'$-magic constant $410$.}
%	\label{fig4}
%\end{figure}
\newline
\noindent Now the following result is immediate.
\begin{lemma}\label{2magicconst}
	If $G$ is an $r$-regular graph with $\theta(G)=1$ and with $S$-magic constant $c$, then
	\begin{equation*}
	\frac{nr+r}{2}+\frac{r}{n} \leq c \leq \frac{nr+3r}{2}.
	\end{equation*}
	\begin{proof}
		The proof is obtained from Lemma \ref{oddreg2} by substituting $a = 1$ and $a = n$ for $c$.
	\end{proof}
\end{lemma}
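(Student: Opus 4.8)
The plan is to reuse the computation already carried out in the proof of Lemma~\ref{oddreg2}, observing that nothing there actually used the hypothesis that $r$ is odd. First I would note that $\theta(G)=1$ means precisely $i(G)=n+1$, and since $|S|=n$ with $S$ a set of positive integers whose maximum is $n+1$, we must have $S=\{1,\dots,n+1\}\setminus\{a\}$ for some $a\in\{1,\dots,n\}$ (the element $n+1$ itself cannot be the deleted label, as otherwise $\max S\le n$). Summing the magic condition $w_G(u)=c$ over all $n$ vertices and using $r$-regularity to see that each label is counted exactly $r$ times yields $nc=r\bigl(\tfrac{(n+1)(n+2)}{2}-a\bigr)$, which rearranges to
\begin{equation*}
c=\frac{nr+3r}{2}+\frac{r(1-a)}{n}.
\end{equation*}
This is exactly equation~(2) from the proof of Lemma~\ref{oddreg2}, now established for arbitrary $r$.

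The remaining step is to optimize the right-hand side over the admissible values of $a$. Since $r,n>0$, the term $\frac{r(1-a)}{n}$ is a strictly decreasing function of $a$, so on the range $a\in\{1,\dots,n\}$ it is maximized at $a=1$ and minimized at $a=n$. Putting $a=1$ gives the upper bound $c=\frac{nr+3r}{2}$, and putting $a=n$ gives
\begin{equation*}
c=\frac{nr+3r}{2}+\frac{r(1-n)}{n}=\frac{nr+r}{2}+\frac{r}{n},
\end{equation*}
the lower bound. Combining the two inequalities proves the claim.

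I do not expect a genuine obstacle in this argument; it is essentially a one-line deduction once the formula for $c$ is in hand. The only points that require a moment's care are: checking that the derivation of the formula for $c$ in Lemma~\ref{oddreg2} is parity-free, so that it may legitimately be invoked here; and pinning down that $a$ ranges over $\{1,\dots,n\}$ rather than $\{1,\dots,n+1\}$ --- it is exactly the exclusion $a\neq n+1$ that encodes $i(G)=n+1$, i.e.\ $\theta(G)=1$.
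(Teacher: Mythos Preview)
Your proposal is correct and follows essentially the same approach as the paper: the paper's own proof simply invokes equation~(2) from Lemma~\ref{oddreg2} and substitutes $a=1$ and $a=n$ to obtain the two bounds. Your version adds welcome detail---justifying why the derivation of~(2) is parity-free and why $a$ ranges over $\{1,\dots,n\}$---but the underlying argument is identical.
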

\begin{observation}\label{hnp}
	If $G\cong H_{n,p}$ is a graph with $\theta(G)=1$ and $S$-magic constant $c$, then
	\begin{eqnarray*}
		\frac{n^2p+n+1}{2}(p-1)\leq c \leq \frac{n^2p+3n-1}{2}(p-1)
	\end{eqnarray*}	
\end{observation}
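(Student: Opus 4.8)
The plan is to reduce the question to an elementary divisibility constraint, exactly as the ``rectangular matrix'' viewpoint in the proof of Theorem~\ref{index1hnp} suggests. First I would note that in $H_{n,p}$ every vertex of the $i$-th partition $V_i$ is adjacent precisely to the vertices outside $V_i$; hence, for an $S$-magic labeling $f$ with $\Sigma:=\sum_{s\in S}s$, the weight of such a vertex is $\Sigma-\sigma_i$, where $\sigma_i=\sum_{u\in V_i}f(u)$, and magicness is equivalent to $\sigma_1=\dots=\sigma_p=:b$. In particular $\Sigma=pb$ and the $S$-magic constant is $c=\Sigma-b=(p-1)b$, so the whole problem is to determine which values $b$ (equivalently $\Sigma$) can occur.

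Next I would bring in the hypothesis $\theta(G)=1$. By Theorem~\ref{index1hnp} this forces $n$ odd and $p$ even, and $S=\{1,\dots,np+1\}\setminus\{a\}$ for some deleted label $a\in\{1,\dots,np\}$, so $\Sigma=\frac{(np+1)(np+2)}{2}-a$. The identity $\Sigma=pb$ means $p\mid\Sigma$, and the key step is the congruence $\frac{(np+1)(np+2)}{2}\equiv\frac p2+1\pmod p$: writing $p=2m$ (legitimate since $p$ is even) and using that $n$ is odd, one has $np+1\equiv1$ and $\tfrac{np}{2}=nm\equiv m\pmod{2m}$, whence $\frac{(np+1)(np+2)}{2}=(np+1)\!\left(\tfrac{np}{2}+1\right)\equiv m+1\pmod{p}$. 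Therefore $a\equiv\frac p2+1\pmod p$, and since $1\le\frac p2+1\le np$ the admissible deleted labels are precisely $\frac p2+1,\ \frac p2+1+p,\ \dots,\ np-\frac p2+1$; in particular $\frac p2+1\le a\le np-\frac p2+1$.

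Finally I would substitute this back. From $b=\frac{\Sigma}{p}=\frac{n^2p+3n}{2}+\frac{1-a}{p}$ and $c=(p-1)b$, the magic constant is a strictly decreasing affine function of $a$, hence it is maximized at $a=\frac p2+1$, giving $c=\frac{n^2p+3n-1}{2}(p-1)$, and minimized at $a=np-\frac p2+1$, giving $c=\frac{n^2p+n+1}{2}(p-1)$; these are exactly the two stated bounds. I would also remark that the bounds are sharp: Theorem~\ref{index1hnp} produces a labeling with deleted label $np-\frac p2+1$, and the theorem immediately following it produces one with deleted label $\frac p2+1$. The only genuinely delicate point in all of this is the residue computation that pins $a$ modulo $p$ — the rest is bookkeeping — so that is the step I would set up most carefully, keeping in mind that the division by $2$ is valid precisely because $np$ is even.
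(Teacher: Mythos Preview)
Your proposal is correct and follows essentially the same route as the paper: the key observation is that $\Sigma=\sum_{s\in S}s$ must be divisible by $p$ (since $\Sigma=pb$), and therefore the deleted label $a$ must lie in a fixed residue class modulo $p$, with extreme values $\tfrac{p}{2}+1$ and $np+1-\tfrac{p}{2}$ yielding the two bounds. Your write-up is in fact more thorough than the paper's, which simply asserts the extreme admissible values of $a$ without carrying out the congruence computation; your explicit verification that $\frac{(np+1)(np+2)}{2}\equiv \tfrac{p}{2}+1\pmod p$ (using $p=2m$ and $n$ odd) is exactly the step the paper elides, and your remark on sharpness via Theorem~\ref{index1hnp} and the subsequent theorem matches the paper's intent as well.
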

\qed
\par The lower and upper bounds in Observation \ref{hnp} are tight when one compares with Lemma \ref{2magicconst}. It is noticed that if $S=\{1,...,np+1\}\setminus \{a\}$, which confirms that $H_{n,p}$ is $S$-magic, then the sum of all the entries in $S$ is divisible by $p$. Therefore, the highest $a$ that can be removed to get a multiple of $p$ is $np+1-\frac{p}{2}$ and the lowest $a$ that can be removed to get a multiple of $p$ is $\frac{p}{2}+1.$ Hence the result follows.
\begin{lemma}\label{rectangleconstruct}
	Let $B$ be an $(n\times p)$-rectangular matrix with distinct entries from the set $\{1,2,..,np+1\}\setminus \{a\},$ where $a\in\{1,2,...,np\}$ having column sum $s$. If there exists an integer $m\geq 1$, $m|p$, then there exixts $m$, $(n\times t)$-rectangular matrices, $B_{m}, (1\leq m \leq t)$, having column sum $s$.
\end{lemma}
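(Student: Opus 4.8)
The plan is to produce the matrices $B_1,\dots,B_m$ by simply slicing the columns of $B$. Since $m\mid p$, put $t=p/m$; this will be the common number of columns of each piece. First I would partition the column index set $\{1,2,\dots,p\}$ of $B$ into the $m$ consecutive blocks $I_k=\{(k-1)t+1,(k-1)t+2,\dots,kt\}$ for $k=1,\dots,m$, and define $B_k$ to be the $(n\times t)$-submatrix of $B$ consisting of exactly the columns indexed by $I_k$.

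Next I would verify the two properties required of the $B_k$. The column-sum condition is immediate: every column of $B_k$ is, verbatim, a column of $B$, and each column of $B$ sums to $s$ by hypothesis, so each column of $B_k$ sums to $s$. For distinctness of entries, I would note that $|I_1|+\cdots+|I_m|=mt=p$ and the blocks are pairwise disjoint, so $I_1,\dots,I_m$ partition $\{1,\dots,p\}$ and each column of $B$ lies in exactly one $B_k$; hence the entry set of $B_k$ is a sub-collection of the entry set of $B$, which consists of distinct elements of $\{1,\dots,np+1\}\setminus\{a\}$, and therefore the entries of $B_k$ are distinct as well.

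There is no genuine obstacle in this lemma; the only points to keep straight are the bookkeeping that $t=p/m$ is a positive integer (this is where $m\mid p$ is used) and that the $m$ blocks exactly exhaust the columns of $B$. The real content is in how the statement gets deployed later: one first constructs a single wide matrix whose entries form a punctured interval $\{1,\dots,(\text{width})\cdot n+1\}\setminus\{a\}$ with constant column sum, and then this lemma (invoked with that larger width in place of $p$, so that $t=p$) chops it into $m$ equal blocks, each of which can be used to label one copy in a disjoint union such as $mH_{n,p}$ or $mC_p[\bar{K_n}]$ while preserving a common magic constant across all copies.
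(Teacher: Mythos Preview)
Your argument is correct and follows essentially the same idea as the paper: partition the $p$ columns of $B$ into $m$ groups of $t=p/m$ columns each and let $B_k$ be the corresponding $(n\times t)$ submatrix, inheriting both the column sums and the distinctness of entries from $B$. The only cosmetic difference is that the paper phrases the construction iteratively (pick any $t$ columns, zero them out, repeat), allowing an arbitrary partition of the columns rather than your fixed consecutive blocks; your explicit choice is a perfectly valid instance of this and is arguably tidier.
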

\begin{proof}
	Consider the $(n\times mt)$-rectangular matrix $B$ with distinct entries from the set $\{1,2,..,np+1\}\setminus \{a\},$ where $a\in\{1,2,...,np\}$ and having column sum $s$.
	\par Construct an $(n\times t)$-rectangular matrix, $B_{1}$ by choosing any $t$ distinct columns of $B$ and update the $B$ matrix by replacing all the entries in the newly chosen $t$ columns with $0's$. Now the updated $B$ matrix will have exactly $(m-1)t$ nonzero columns and $t$ columns having all zero entries.
	\par Now, repeat the process to obtain the next matrix $B_{2}$ by choosing any $t$ non-zero columns from the remaining $(m-1)t$ columns and update the $B$ matrix in the same manner as in first step. Now repeatedly apply the above technique to obtain the remaining $m-2$ matrices, $B_{i},(3\leq i \leq m)$, until the matrix $B$ becomes an zero matrix.
\end{proof}
\par From Theorem \ref{multi}, it is observed that in both the cases when $n$ is odd, $p$ is even and when $np$ is odd, $p\equiv 3 \mod 4$ and $m$ is even, $\theta(mH_{n,p})\neq0$. The following theorem computes the distance magic index of $mH_{n,p}$ for above cases.
\begin{theorem}\label{multi6}
	If $n>1,p>1,m\geq 1$, then\\
	\[  \theta(mH_{n,p}) = \left\{
	\begin{array}{ll}
	0 &  $ for $n$ even or $mnp$ is odd,$\\
	1 &  $ otherwise.$
	\end{array}
	\right.\]
\end{theorem}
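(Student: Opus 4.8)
The proof splits along the two lines of the claimed formula. For the first line, if $n$ is even or $mnp$ is odd, then Theorem \ref{multi}(i) already gives a distance magic labeling of $mH_{n,p}$, and so $\theta(mH_{n,p})=0$ by the observation following the definition of $\theta$. Hence the entire content is the remaining case: $n$ odd and $mnp$ even. Since $n$ is odd, this forces $mp$ even, i.e. either $p$ is even, or $p$ is odd with $m$ even; I would treat this case by proving the two inequalities $\theta(mH_{n,p})\geq 1$ and $\theta(mH_{n,p})\leq 1$ separately.

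For the lower bound I would argue that $mH_{n,p}$ is not distance magic via a short case analysis. Each copy of $H_{n,p}$ is $r$-regular with $r=n(p-1)$, so $mH_{n,p}$ is $r$-regular. If $p$ is even, then $r$ is a product of two odd numbers, hence odd, and Theorem \ref{oddregular} applies. If $p$ is odd, then $m$ is even and $np$ is odd, and according as $p\equiv 1\mod 4$ or $p\equiv 3\mod 4$ we invoke Theorem \ref{froncek} or Theorem \ref{multi}(ii) respectively. This case split is exhaustive, so in the ``otherwise'' case $mH_{n,p}$ is never distance magic, whence $\theta(mH_{n,p})\geq 1$.

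For the matching upper bound I would reuse the rectangular-matrix machinery from the proof of Theorem \ref{index1hnp}. Since $mp$ is even and $n$ is odd, that construction, applied with ``number of parts'' equal to $mp$, produces an $(n\times mp)$-matrix $A$ whose entries are exactly $\{1,\dots,mnp+1\}\setminus\{mnp+1-\frac{mp}{2}\}$ and all of whose columns sum to the common value $s=\frac{n^2mp+n+1}{2}$. Because $m\mid mp$, Lemma \ref{rectangleconstruct} slices $A$ into $m$ matrices $B_1,\dots,B_m$, each of size $(n\times p)$ and each still having constant column sum $s$. Labelling the $j$-th part of the $i$-th copy of $H_{n,p}$ by the $j$-th column of $B_i$ — the same matrix-to-labeling correspondence used in Theorem \ref{index1hnp} — makes every vertex in every copy have weight $(p-1)s$, a single constant for the whole of $mH_{n,p}$; and the entries of $B_1,\dots,B_m$ together exhaust $S=\{1,\dots,mnp+1\}\setminus\{mnp+1-\frac{mp}{2}\}$. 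Since $1\leq mnp+1-\frac{mp}{2}\leq mnp$, this $S$ is admissible, so $mH_{n,p}$ is $S$-magic and $\theta(mH_{n,p})\leq 1$. Combined with the lower bound, $\theta(mH_{n,p})=1$.

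I do not expect a real obstacle: both the failure of distance magicness and the explicit labeling are inherited from results already in hand, and the only genuinely new point is the easy observation that one $(n\times mp)$-matrix with equal column sums can be cut into $m$ blocks to label the $m$ disjoint copies of $H_{n,p}$ with one common magic constant. The only things needing care are verifying that the lower-bound case analysis is complete (it is, as noted above) and that the deleted label $mnp+1-\frac{mp}{2}$ genuinely lies in $\{1,\dots,mnp\}$ — both entirely routine.
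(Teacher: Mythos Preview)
Your proof is correct and follows the same approach as the paper: for the upper bound you build the $(n\times mp)$-matrix of Theorem~\ref{index1hnp} for $H_{n,mp}$ and slice it via Lemma~\ref{rectangleconstruct}, exactly as the paper does. Your lower-bound case analysis is in fact more careful than the paper's, since you explicitly dispose of the subcase $p$ even via Theorem~\ref{oddregular} (noting $r=n(p-1)$ is odd), whereas the paper only cites Theorems~\ref{multi}(ii) and~\ref{froncek} for the $np$ odd subcases and leaves the $p$ even subcase implicit.
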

\begin{proof}
	Using Theorem \ref{multi}, it is clear that $\theta(mH_{n,p})=0$, when either $n$ is even or $mnp$ is odd and $\theta(mH_{n,p})\neq 0$, when either $np$ is odd, $p \equiv 3 \mod 4$, and $m$ is even. On the other hand, by Theorem \ref{froncek}, one can conclude that $\theta(mH_{n,p})\neq 0$, when $m$ is even, $n$ is odd, $p\equiv 1
	\mod 4,$ and $p > 1$.\\
	For all the remaining cases, use Theorem \ref{index1hnp} to construct the rectangular matrix $A$ associated with the graph $H_{n,mp}$.
	Now using Lemma \ref{rectangleconstruct}, construct the $(n\times p)$-matrices $B_{k},$ for $k\in\{1,...,m\}$
%	\par For $k\in\{1,...,m\}$, define the $(n\times p)$-matrices $B_{k}=(b_{i,j})$ as  $b_{i,j} = a_{i,s}$ for all  $i\in\{ 1,...,n\}$ and for all $s\in \{(k-1)p+1,..., kp\}$.
	Here, each $B_{k}$ forms the matrix associated with the $k^{th}$ copy of $H_{n,p}$ and hence we obtain an $S$-magic labeling of $mH_{n,p}$ with $c = \frac{n^2mp+n+1}{2}(p-1)$. Therefore, $\theta(mH_{n,p})=1.$
\end{proof}
Theorem \ref{multi1} confirms that if $n$ is even or $mnp$ is odd or $n$ is odd and $p \equiv 0 \mod 4$, then $\theta(mC_{p}[\bar{K_{n}}])=0.$
Now the remaining cases are given below. \\
\textbf{Case 1:} $n$ is odd, $m$ is even, $p\equiv 2 \mod 4.$\\
\textbf{Case 2:} $n$ is odd, $m$ is odd, $p\equiv 2 \mod 4.$\\
\textbf{Case 3:} $n$ is odd, $m$ is even, $p$ is odd.
\par The following theorem determines the distance magic index of the graph $mC_{p}[\bar{K_{n}}]$ for all the above mentioned three cases.
\begin{theorem}
	Let $m\geq 1, n>1$ and $p\geq3$, then\\
	\[  \theta(mC_{p}[\bar{K_{n}}]) = \left\{
	\begin{array}{ll}
	0 &  $ if $n$ is~even or $mnp$ is odd or $n$ is odd, $p\equiv 0 \textnormal{~mod~} 4,\\
	1 &  $ otherwise.$
	\end{array}
	\right.\]
\end{theorem}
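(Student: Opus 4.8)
The proof naturally splits along the two cases of the claimed formula. The value $0$ needs no new argument: Theorem~\ref{multi1} characterizes exactly when $mC_{p}[\bar{K_{n}}]$ is distance magic — namely when $n$ is even, or $mnp$ is odd, or $n$ is odd and $p\equiv 0\bmod 4$ — and a graph has distance magic index $0$ if and only if it is distance magic. Hence it remains to treat the three residual configurations listed just before the theorem: (i) $n$ odd, $m$ even, $p\equiv 2\bmod 4$; (ii) $n$ odd, $m$ odd, $p\equiv 2\bmod 4$; (iii) $n$ odd, $m$ even, $p$ odd. In each of them Theorem~\ref{multi1} also asserts that $mC_{p}[\bar{K_{n}}]$ is \emph{not} distance magic, so $\theta(mC_{p}[\bar{K_{n}}])\geq 1$, and it suffices to exhibit an $S$-magic labeling with $S=\{1,\dots,mnp+1\}\setminus\{a\}$ for a suitable deleted label $a\in\{1,\dots,mnp\}$, which forces $\theta(mC_{p}[\bar{K_{n}}])\leq 1$.

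Think of $C_{p}[\bar{K_{n}}]$ as carrying $p$ blocks $B_{0},\dots,B_{p-1}$ (the fibres over the vertices of $C_{p}$), each of size $n$, with $B_{i}$ completely joined to $B_{i-1}$ and $B_{i+1}$, indices modulo $p$. If $\sigma_{i}$ denotes the sum of the $n$ labels placed on $B_{i}$, then every vertex of $B_{i}$ has weight $\sigma_{i-1}+\sigma_{i+1}$. Thus, if the block sums over all $m$ copies are all set equal to one value $s$, every vertex of $mC_{p}[\bar{K_{n}}]$ acquires the same weight $2s$. (For $p$ odd and for $p\equiv 2\bmod 4$ the magic equations $\sigma_{i-1}+\sigma_{i+1}=c$ in fact force all $\sigma_{i}$ to coincide, so equal block sums are the only option; here we only need this as a sufficient condition, whereas the freedom of alternating block sums available when $p\equiv 0\bmod 4$ is precisely the $\theta=0$ regime and does not intervene.)

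Now observe that in each of (i)--(iii) the integer $n$ is odd and $mp$ is even, so by Theorem~\ref{index1hnp} we have $\theta(H_{n,mp})=1$, and its proof produces an $(n\times mp)$-rectangular matrix $A$ whose entries are precisely the members of $\{1,\dots,mnp+1\}\setminus\{a\}$ (with $a$ the deleted label supplied there, which lies in $\{1,\dots,mnp\}$) and each of whose columns sums to $s=\frac{n^{2}mp+n+1}{2}$. Invoking Lemma~\ref{rectangleconstruct}, split $A$ into $m$ rectangular matrices $B^{(1)},\dots,B^{(m)}$ of size $n\times p$, every column of each still summing to $s$, and assign $B^{(k)}$ to the $k$-th copy of $C_{p}[\bar{K_{n}}]$ by using its $i$-th column to label the block $B_{i}$ of that copy. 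By the previous paragraph each vertex of $mC_{p}[\bar{K_{n}}]$ then has weight $2s=n^{2}mp+n+1$, and the labels used are exactly $\{1,\dots,mnp+1\}\setminus\{a\}$, so $mC_{p}[\bar{K_{n}}]$ is $S$-magic with $\theta(mC_{p}[\bar{K_{n}}])\leq 1$; combined with the lower bound from Theorem~\ref{multi1}, this yields $\theta(mC_{p}[\bar{K_{n}}])=1$ in all three cases. I anticipate no serious obstacle — the construction is essentially that of Theorem~\ref{multi6}, with the cyclic blocks of $C_{p}[\bar{K_{n}}]$ playing the role of the parts of $H_{n,p}$ — the one point needing care being the bookkeeping verification that each of the three surviving configurations indeed satisfies ``$n$ odd and $mp$ even'', which is what makes Theorem~\ref{index1hnp} applicable to $H_{n,mp}$.
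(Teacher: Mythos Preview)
Your proposal is correct and follows essentially the same route as the paper: invoke Theorem~\ref{multi1} for the $\theta=0$ cases and the lower bound, then in the remaining configurations build the $(n\times mp)$-matrix $A$ from Theorem~\ref{index1hnp} (applicable because $n$ is odd and $mp$ is even), split it via Lemma~\ref{rectangleconstruct} into $m$ blocks of $p$ columns, and label the fibres of each copy of $C_{p}[\bar{K_{n}}]$ so that every vertex has weight $2s=n^{2}mp+n+1$. Your write-up is in fact more complete than the paper's, since you explicitly justify why equal block sums yield an $S$-magic labeling of $C_{p}[\bar{K_{n}}]$ and verify that $mp$ is even in each of the three residual cases.
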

\begin{proof}
	Let $G\cong mC_{p}[\bar{K_{n}}]$. From Theorem \ref{multi1}, it is clear that $\theta(G)=0$, when $n$ even or $mnp$ is odd or $n$ is odd and $p\equiv 0 \mod 4.$\par
	Now, for all the remaining cases, using Theorem \ref{index1hnp} construct the matrix $A$ associated with the graph $H_{n,mp}$ and use $A$ in Lemma \ref{rectangleconstruct} to construct the $m$ rectangular matrices associated with $m$ copies of graph $C_{p}[\bar{K_{n}}]$. Hence, we obtain a $S$-magic labeling of $G$ with $c = n^2mp+n+1$ and hence $\theta(G)=1.$
\end{proof}
Let $G$ be an $r$-regular graph on $p$ vertices. From Theorem \ref{froncek3}, for the graph $G[\bar{K_{n}}]$, if $n$ is odd, $r$ is even and $p$ is even except when $p\equiv r \equiv 2\mod 4$, then $\theta(G[\bar{K_{n}}] = 0.$
%From Theorem \ref{millerregularcomp} it is clear that the graph $G[\bar{K_{n}}]$ is distance magic if $n$ is even. Also from Theorem \ref{froncek2} we have that if $r$ is even, $p$ is odd and $n$ is odd, $\theta(G[\bar{K_{n}}])=0$ and from Theorem \ref{froncek1}, if $r\equiv p\equiv 2 \mod 4$ and $n$ is odd, then $\theta(G[\bar{K_{n}}])\neq0$.
The following theorem computes the distance magic index of the graph $G[\bar{K_{n}}]$.
\begin{theorem}
	Let $G$ be an $r$-regular graph on $p$ vertices. Then,
		\[ \theta(G[\bar{K_{n}}]) = \left\{
	\begin{array}{ll}
	0 &  $ if $n$ is even$~or~$n,p$~are~odd$, $r$ is even$,\\
	1 &  $ if $n,r$ are odd~or~$n$~is~odd, $r~\equiv p\equiv2~mod~4\\
	0 &  $ otherwise$.
	\end{array}
	\right.\]
\end{theorem}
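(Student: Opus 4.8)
The plan is to split according to the parities of $n$, $p$, and $r$, relying throughout on one structural fact about the lexicographic product: since $\bar K_n$ has no edges, a vertex $(v,k)$ of $G[\bar K_n]$ is adjacent exactly to all vertices lying over $N_G(v)$. Thus a labeling of $G[\bar K_n]$ amounts to a partition of the label set into $p$ blocks $C_{v_1},\dots,C_{v_p}$ of size $n$ (with $C_{v_j}$ labeling the $j$th copy of $\bar K_n$), and if $\sigma_j$ denotes the sum of the labels in $C_{v_j}$, then every vertex over $v_j$ has weight $\sum_{v_i\in N_G(v_j)}\sigma_i$. In particular, if all $\sigma_j$ are equal then $G[\bar K_n]$ is magic with constant $r\sigma_1$ for \emph{every} $r$-regular $G$; so the whole argument reduces to choosing the block sums.

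For the two ``$\theta=0$'' lines there is nothing new to do: Theorem~\ref{millerregularcomp} covers every even $n$, and when $n$ and $p$ are both odd Theorem~\ref{froncek2} forces $r$ even and already yields a distance magic labeling of $G[\bar K_n]$. It remains to treat $n$ odd with $p$ even, and here I would first record the uniform bound $\theta(G[\bar K_n])\le 1$. By Theorem~\ref{index1hnp} there is an $n\times p$ array $A$ with distinct entries from $\{1,\dots,np+1\}\setminus\{np+1-\tfrac p2\}$ all of whose columns sum to $\tfrac{n^2p+n+1}{2}$, which is an integer precisely because $n$ is odd and $np$ is even. Assigning the $j$th column of $A$ to the $j$th class of $G[\bar K_n]$ makes every vertex-weight equal to $r\cdot\tfrac{n^2p+n+1}{2}$, so $G[\bar K_n]$ is $S$-magic with $S=\{1,\dots,np+1\}\setminus\{np+1-\tfrac p2\}$, hence $\theta\le 1$.

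The two ``$\theta=1$'' sublines then follow by combining this bound with a nonmagicness result. If $n$ and $r$ are both odd, the handshake lemma forces $p$ even, so the bound applies; moreover $G[\bar K_n]$ is $rn$-regular with $rn$ odd, hence not distance magic by Theorem~\ref{oddregular}, so $\theta=1$. If $n$ is odd with $p\equiv r\equiv 2\pmod 4$, then $G[\bar K_n]$ is not distance magic by Theorem~\ref{froncek1}, and again $\theta=1$.

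The remaining case --- $n$ odd, $p$ even, $r$ even, and \emph{not} $p\equiv r\equiv 2\pmod 4$ --- is where the real work lies and is, I expect, the main obstacle: now $\tfrac{n(np+1)}{2}$ is not an integer, so equal block sums are impossible, and one must make the $\sigma_j$ vary while keeping $\sum_{v_i\in N_G(v_j)}\sigma_i$ constant over all $j$. The route I would take is: (i) use the two integer target sums $S_1=\tfrac{n(np+1)-1}{2}$ and $S_2=\tfrac{n(np+1)+1}{2}$, noting $S_1+S_2=n(np+1)$ so that $\tfrac p2$ blocks of each exhaust $\{1,\dots,np\}$ and $\tfrac r2(S_1+S_2)=\tfrac{rn(np+1)}{2}$ is the forced magic constant; (ii) since $r$ is even, split $V(G)$ into two classes so that every vertex of $G$ has exactly $r/2$ neighbours in each, and give all vertices of one class block sum $S_1$ and of the other block sum $S_2$, so that every weight becomes $\tfrac r2(S_1+S_2)$; and (iii) realize $\{1,\dots,np\}$ concretely as $\tfrac p2$ blocks of sum $S_1$ and $\tfrac p2$ blocks of sum $S_2$, for instance by pairing columns of an array built in the style of Theorem~\ref{index1hnp}. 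Step (ii) is the crux: one must show that such a balanced bipartition exists for an \emph{arbitrary} $r$-regular $G$ under exactly these congruences (which are essentially the conditions of Theorem~\ref{froncek3}), and this is precisely the point that needs care. Finally, the trivial subcase $r=0$ should be noted separately, as should the standing assumption $n\ge 2$, since for $n=1$ the statement degenerates into claims about $G$ itself (e.g. that every $2$-regular graph on an odd number of vertices is distance magic) that fail in general.
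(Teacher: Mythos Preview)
Your handling of the first two lines of the display matches the paper's: the same cited results (Theorems~\ref{millerregularcomp}, \ref{froncek2}, \ref{oddregular}, \ref{froncek1}) dispose of the explicit $\theta=0$ cases and of the two $\theta=1$ subcases, and the upper bound $\theta\le 1$ via the column array of Theorem~\ref{index1hnp} is exactly the paper's device as well.

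The divergence is in the ``otherwise'' line ($n$ odd, $p$ even, $r$ even, and not $p\equiv r\equiv 2\pmod 4$). The paper does not construct anything here; it simply cites Theorem~\ref{froncek3}. Your proposed route through a balanced bipartition is genuinely different, and the gap you flag at step~(ii) is real and cannot be closed in this generality. Take $G=C_3\cup C_5$: it is $2$-regular on $p=8\equiv 0\pmod 4$ vertices, so with any odd $n$ we are in the ``otherwise'' case. A bipartition in which every vertex has exactly one neighbour in each class would be a proper $2$-colouring, which neither odd cycle admits, so your two-value scheme fails. Worse, for this $G$ the magic condition $\sum_{u\in N_G(v)}\sigma_u=c$ on each odd cycle forces all $\sigma_j$ to be equal, and you already noted that equal block sums are impossible when $n$ is odd and $p$ is even; hence $(C_3\cup C_5)[\bar K_3]$ is not distance magic at all and has $\theta=1$, contradicting the claimed $\theta=0$. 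So the ``otherwise'' line is false for arbitrary $r$-regular $G$, your instinct that step~(ii) is the crux was exactly right, and the paper's appeal to Theorem~\ref{froncek3} (an \emph{existence} statement for some EIT with given parameters, not a statement about every $r$-regular graph) does not fill the gap either. Your side remarks about needing $n\ge 2$ and treating $r=0$ separately are also well taken.
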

\begin{proof}
	Let $G$ be a graph on $p$ vertices $v_{1},...,v_{p}$ and let $V_{i} = \{v_{i}^{1},...,v_{i}^n\}$ be set the vertices of $G[\bar{K_{n}}]$ that replace the vertex $v_{i}$ of $G$ for all $i = 1,...,p$. Note that here $V(G[\bar{K_{n}}])=\bigcup_{i=1}^{p}V_{i}.$  \par When $n$ is even, by Theorem \ref{millerregularcomp}, $\theta(G[\bar{K_{n}}])=0$ and when $n$ is odd, $p$ is odd and $r$ is even, by Theorem \ref{froncek2}, $\theta(G[\bar{K_{n}}])=0$. Further, when $n$ is odd and $p\equiv r \equiv 2\mod 4$, then by Theorem \ref{froncek1}, $\theta(G[\bar{K_{n}}])\neq0$. Also when $n$ is odd and $r$ is odd, by Theorem \ref{oddregular}, $\theta(G[\bar{K_{n}}])\neq0$. Further for all the other cases $\theta(G[\bar{K_{n}}]=0$ by Theorem \ref{froncek3}. Now for both the cases when  $\theta(G[\bar{K_{n}}])\neq0$, use Theorem \ref{index1hnp}, to construct the rectangular matrix $A$ associated with the graph $H_{n,p}$ and use the $i^{th}$ column of $A$ to label the set of vertices, $V_{i}$, for all $i=1,2,..,p$. Hence, we obtain a S-magic labeling of $G[\bar{K_{n}}]$, with  $c = r\bigl(\frac{n^2p+n+1}{2}\bigr)$. Therefore we obtain that $\theta(C_{p}[\bar{K_{n}}])=1.$
\end{proof}

%----------------------------------------------
\section{Conclusion}
In this paper, the distance magic index of disjoint union of $m$ copies of $H_{n,p}$ and disjoint union of $m$ copies of $C_{p}[\bar{K_{n}}]$ are computed and few necessary conditions are derived for a regular graph $G$ for which $\theta(G)$ is  $1$. The paper establishes a technique to construct a new set of labels from an existing one in such a way that both magic constants are distinct. Further, the lower and upper bounds of magic constant of a regular graph $G$ with $\theta(G)=1$, are also determined.
\bibliographystyle{elsarticle-num.bst}
\bibliography{bibiliography_list}
\end{document}